\newtheorem{theorem}{Theorem}[section]
\newtheorem{prop}{Lemma}[section]
\newtheorem{remark}{Remark}[section]
\DeclareMathOperator{\esssup}{ess\,sup}
\begin{document}

\begin{center}
\noindent{\bf\Large Estimation of Sobolev embedding constant on a domain dividable into bounded convex domains
}\vspace{10pt}\\
 {\normalsize Makoto Mizuguchi$^{1,*}$, Kazuaki Tanaka$^{1,\dagger}$, Kouta Sekine$^{2,\ddagger}$, Shin'ichi Oishi$^{2,\S}$}\vspace{5pt}\\
 {\it\normalsize $^{1}$Graduate School of Fundamental Science and Engineering, Waseda University, Japan\\
 $^{2}$Faculty of Science and Engineering, Waseda University, Japan}
\end{center}

{\bf Abstract}.
This paper is concerned with an explicit value of the embedding constant from $W^{1,q}(\Omega)$ to $L^{p}(\Omega)$ for a bounded domain $\Omega\subset\mathbb{R}^N~(N\in\mathbb{N})$, where $1\leq q\leq p\leq \infty$. To obtain this value, we previously proposed a formula for estimating the embedding constant on bounded and unbounded Lipschitz domains by estimating the norm of Stein's extension operator, in the article (K. Tanaka, K. Sekine, M. Mizuguchi, and S. Oishi, Estimation of Sobolev-type embedding constant on domains with minimally smooth boundary using extension operator, Journal of Inequalities and Applications, Vol.~$389$, pp.~$1$-$23$, 2015). This formula is also applicable to a domain that can be divided into Lipschitz domains. However, the values computed by the previous formula are very large. In this paper, we propose several sharper estimations of the embedding constant on a bounded domain that can be divided into convex domains. 
 
{\it Key words:} Sobolev embedding constant, Hardy-Littlewood-Sobolev inequality, Young inequality
\renewcommand{\thefootnote}{\fnsymbol{footnote}}
\footnote[0]{{\it E-mail address:} $^{*}$\texttt{makoto.math@fuji.waseda.jp},~~
$^{\dagger}$\texttt{imahazimari@fuji.waseda.jp},\\[2pt]
\hspace*{2.95cm}$^{\ddagger}$\texttt{k.sekine@aoni.waseda.jp},~~
$^{\S}$\texttt{oishi@waseda.jp}\\[-3pt]}
\renewcommand\thefootnote{*\arabic{footnote}}




\section{Introduction}
We consider the Sobolev type embedding constant $C_p(\Omega)$ from $W^{1,q}(\Omega)~(1\leq q\leq p\leq\infty)$ to $L^p(\Omega)$.
The constant $C_p(\Omega)$ satisfies
\begin{align}\label{aim}
\left(\int_{\Omega}|u(x)|^pdx\right)^{\frac{1}{p}}\leq C_p(\Omega)\left(\int_{\Omega}|u(x)|^qdx+\int_{\Omega}|\nabla u(x)|^qdx\right)^{\frac{1}{q}}
\end{align}
for all $u\in W^{1,q}(\Omega)$, where $\Omega\subset\mathbb{R}^N~(N\in\mathbb{N})$ is a bounded domain and $|x|=\sqrt{\sum_{j=1}^{N}x_j^2}$ for $x=(x_1,\cdots, x_N)\in\mathbb{R}^N$.
Here, $L^p(\Omega)~(1\leq p<\infty)$ is the functional space of the $p$th power Lebesgue integrable functions over $\Omega$ endowed with the norm $\|f\|_{L^p(\Omega)}:=(\int_{\Omega}|f(x)|^pdx)^{1/p}$ for $f\in L^p(\Omega)$, and $L^{\infty}(\Omega)$ is the functional space of Lebesgue measurable functions over $\Omega$ endowed with the norm $\|f\|_{L^{\infty}(\Omega)}=\esssup\displaylimits_{x\in\Omega}|f(x)|$ for $f\in L^{\infty}(\Omega)$. Moreover, $W^{k,p}(\Omega)$ is the $k$th order $L^p$-Sobolev space on $\Omega$ endowed with the norm $\|f\|_{W^{1,p}(\Omega)}=(\int_{\Omega}|f(x)|^pdx+\int_{\Omega}|\nabla f(x)|^pdx)^{1/p}$ for $f\in W^{1,p}(\Omega)$ if $1\leq p<\infty$ and $\|f\|_{W^{1,\infty}(\Omega)}=\esssup\displaylimits_{x\in\Omega}|f(x)|+\esssup\displaylimits_{x\in\Omega}|\nabla f(x)|$ for $f\in W^{1,\infty}(\Omega)$ if $p=\infty$.

Since inequality \eqref{aim} has significance for studies on partial differential equations, many researchers studied this type of Sobolev inequality and an explicit value of $C_p(\Omega)$ (see, e.g., \cite{S.L.sobolev, Adams, Whitney, Hestenes, Calderon, Stein, Roger}), following the pioneering work by S.L.~Sobolev \cite{S.L.sobolev}.
In particular, our interest is in the applicability of the constant to verified numerical computation methods for PDEs, which originate from Nakao's \cite{nakao} and Plum's work \cite{plum} and have been further developed by many researchers (see, e.g., \cite{nakao, plum, cai} and the references therein).

The existence of $C_p(\Omega)$ for various classes of domains $\Omega$ (e.g., domains with the cone condition, domains with the Lipschitz boundary, and the $(\varepsilon, \delta)-$domain) has been proven by constructing suitable extension operators from $W^{k,p}(\Omega)$ to $W^{k,p}(\mathbb{R}^N)$ (see, e.g.,\cite{Whitney, Hestenes, Calderon, Stein, Roger}).

Several formulas for computing  explicit values of $C_p(\Omega)$ have been proposed.
For example, the best constant in the classical Sobolev inequality on $\mathbb{R}^N$ was independently shown by Aubin \cite{Aubin} and Talenti \cite{Talenti}.
Moreover, for the case in which $N=1$ and $p=\infty$, the best constant of $C_p(\Omega)$ was proposed under suitable boundary conditions, e.g., the Dirichlet, the Neumann, and the Periodic condition \cite{1dim1, 1dim2, 1dim3, 1dim4, 1dim5}. In recent years, several formulas for obtaining an explicit value of $C_p(\Omega)$ for $\Omega\subset \mathbb{R}^N~(N\geq2)$ have been further proposed. For example, for a square domain $\Omega$, a tight estimate of $C_p(\Omega)$ was provided in \cite{cai}. Moreover, we have previously proposed a formula for computing an explicit value of $C_p(\Omega)$ for (bounded and unbounded) Lipschitz domains $\Omega$ by estimating the norm of Stein's extension operator \cite{tanaka}. This formula can be applied to domains $\Omega$ that can be divided into a finite number of Lipschitz domains $\Omega_i~(i=1,2,3,\cdots, n)$ such that 
\begin{align}\label{omega1}
\overline{\Omega}=\displaystyle\bigcup_{1\leq i\leq n}\overline{\Omega_i}
\end{align}
and
\begin{align}\label{omega2}
\Omega_i\cap\Omega_j=\phi~(i\neq j),
\end{align}
where $\phi$ is the empty set and $\overline{\Omega}$ denotes the closure of $\Omega$ (see Theorem \ref{estimatecor}). Although this formula is applicable to such general domains, the values computed by this formula are very large; see Section \ref{sec:num} for some explicit values. 

In this paper, we propose sharper estimations of $C_p(\Omega)$ for a domain $\Omega$ that can be divided into a finite number of bounded convex domains $\Omega_i~(i=1,2,3,\cdots, n)$ satisfying \eqref{omega1} and \eqref{omega2}. Note that the present class of $\Omega$ is somewhat special compared to the class treated in \cite{tanaka}, since any bounded convex domain is a Lipschitz domain. 
To obtain a sharper estimation of $C_p(\Omega)$, we focus on the constants $D_p(\Omega)$, which lead to the explicit values of $C_p(\Omega)$ (see Theorem \ref{mainembedding}), such that
\begin{align}\label{poincareine}
\left(\int_{\Omega}|u(x)-u_{\Omega}|^pdx\right)^{\frac{1}{p}}\leq D_p(\Omega)\left(\int_{\Omega}|\nabla u(x)|^qdx\right)^{\frac{1}{q}}~~\mbox{for all}~ u\in W^{1,q}(\Omega),
\end{align}
where $u_{\Omega}=|\Omega|^{-1}\int_{\Omega}u(x)dx$ and $|\Omega|$ is the measure of $\Omega$. 
 Inequality \eqref{poincareine} is called the Sobolev-Poincar\'e inequality, which has also been studied by many researchers (see, e.g., \cite{koonj, hajkos, bojarski, denylion}).
The existence of $D_p(\Omega)$ was shown for a John domain $\Omega$ while assuming that $1\leq q<N$, $p=Nq/(N-q)$ \cite{bojarski}.
It was also shown that, when $p\neq Nq/(N-q)$, $D_p(\Omega)$ exists if and only if $W^{1,q}(\Omega)$ is continuously embedded into $L^p(\Omega)$ \cite{denylion}.
Moreover, there are several formulas for deriving an explicit value of $D_p(\Omega)$ for one-dimensional domains $\Omega$ \cite{BenSai, LiuWhe, ChuWhe}. 
In the higher-dimensional cases, however, little is known about explicit values of $D_p(\Omega)$, except for some special cases (see, e.g., \cite{gabriel} and \cite{payne} for the cases in which $p=q=1$ and $p=q=2$, respectively).

We propose four theorems (Theorem \ref{conesti0} to \ref{conesti1}) for obtaining explicit values of $D_p(\Omega)$ on a bounded convex domain $\Omega$.
Each theorem can be used under the corresponding conditions listed in Table \ref{tb:assume0}.
\begin{table}[htb]
\begin{center}
\caption{The assumptions of $p$, $q$, and $N$ imposed in Theorem \ref{conesti0}, \ref{conesti00}, \ref{conesti}, and \ref{conesti1}}
\vspace{2mm}
\begin{tabular}{c|c|c|c}
Theorem & $p$ & $q$ & $N$ \\ \hline
\ref{conesti0}&$2<p\leq \frac{2N}{N-1}$&$q\geq\frac{p}{p-1}$& $N\geq 1$\\ \hline
\ref{conesti00}&$2<p<\frac{2N}{N-2}$&$q=2$& $N\geq 2$\\ \hline
\ref{conesti}&$q\leq p<\frac{qN}{N-q}$&$q\geq1$&$N\geq q$\\ \hline
\ref{conesti1}&$p=\infty$&$q\geq1$& $N<q$\\   
\end{tabular}
\label{tb:assume0}
\end{center}
\end{table}
In Theorem \ref{conesti0} and \ref{conesti00}, formulas of $D_p(\Omega)$ are derived from the best constant in the Hardy-Littlewood-Sobolev inequality on $\mathbb{R}^N$.
In Theorem \ref{conesti} and \ref{conesti1}, formulas of $D_p(\Omega)$ are also derived from the best constant in Young's inequality on $\mathbb{R}^N$. These values of $D_p(\Omega)$ yield the explicit values of $C_p(\Omega)$ through Theorem \ref{mainembedding}.

The remainder of this paper is organized as follows:
In Section \ref{sec:preliminary}, we introduce the notation used throughout this paper.
In Section \ref{subsec1}, we propose a formula for deriving $C_p(\Omega)$ from a known value of $D_p(\Omega)$.
In Section \ref{subsec2}, we propose four formulas for obtaining the explicit value of $D_p(\Omega)$ under the conditions listed in Table \ref{tb:assume0} for a bounded convex domain $\Omega$.
In Section \ref{sec:num}, explicit values of $C_p(\Omega)$ are estimated for certain domains.

\section{Notation}\label{sec:preliminary}
For any bounded domain $S\subset\mathbb{R}^N~(N\in\mathbb{N})$, we define $d_{S}$:=$\sup_{x,y\in S}|x-y|$.
The closed ball centered around $z\in\mathbb{R}^N$ with radius $\rho>0$ is denoted by $B(z,\rho):=\{x\in \mathbb{R}^N\mid|x-z|\leq \rho\}$. For $m\geq 1$, let $m'$ be H\"{o}lder's conjugate of $m$, that is, $m'$ is defined by
\begin{align*}
\begin{cases} 
m'=\infty,~~~~~~~\mbox{if}~m=1,\\
m'=\frac{m}{m-1},~~~~\mbox{if}~1<m<\infty,\\ 
m'=1,~~~~~~~~\mbox{if}~m=\infty.
\end{cases} 
\end{align*}

For two domains $\Omega\subseteq\mathbb{R}^N$ and $\Omega'\subseteq\mathbb{R}^N$ such that $\Omega\subseteq\Omega'$, we define the operator $E_{\Omega,\Omega'}:L^p(\Omega)\to L^p(\Omega')~(1\leq p\leq\infty)$ by
\begin{align*}
\left(E_{\Omega,\Omega'}f\right )(x)=
\begin{cases}
f(x),~~x\in \Omega,\\
~~0,\hspace{0.6cm}x\in\Omega'\setminus\Omega
\end{cases} 
\end{align*}
for $f\in L^{p}(\Omega)$. Note that $E_{\Omega,\Omega'}f\in L^p(\Omega')$ satisfies
\[
\|E_{\Omega,\Omega'}f\|_{L^{p}(\Omega')}=\|f\|_{L^{p}(\Omega)}.
\]

\section{Formula for explicit values of the embedding constant}\label{subsec1}
The following theorem enables us to obtain an explicit value of $C_p(\Omega)$ from a known $D_p(\Omega)$. 
\begin{theorem}\label{mainembedding} 
Let $\Omega\subset\mathbb{R}^N~(N\in\mathbb{N})$ be a bounded domain, and let $p$, $q\in [1,\infty]$. Suppose that there exists a finite number of bounded domains $\Omega_i~(i=1,2,3,\cdots, n)$ satisfying \eqref{omega1} and \eqref{omega2}.
Moreover, suppose that for every $\Omega_i~(i=1,2,3,\cdots, n)$ there exist constants $D_p(\Omega_i)$ such that
\begin{align}\label{mainine}
\|u-u_{\Omega_i}\|_{L^{p}(\Omega_i)}\leq D_{p}(\Omega_i)\|\nabla u\|_{L^q(\Omega_i)} ~\mbox{for all}~u\in W^{1,q}(\Omega_i).
\end{align}
Then, \eqref{aim} holds valid for
\begin{align}
C_p(\Omega)=2^{1-\frac{1}{q}}\max\left(\max_{1\leq i\leq n}|\Omega_i|^{\frac{1}{p}-\frac{1}{q}},~\max_{1\leq i\leq n}D_p(\Omega_i)\right), \label{sec3:theo:cp}
\end{align}
where this formula is understood with $1/\infty=0$ when $p=\infty$ and/or $q=\infty$.
\end{theorem}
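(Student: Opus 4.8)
The plan is to reduce the global inequality \eqref{aim} to the local Sobolev--Poincar\'e inequalities \eqref{mainine} by splitting the integral over $\Omega$ into the pieces $\Omega_i$, and on each piece subtracting and adding the mean value $u_{\Omega_i}$. First I would write, using \eqref{omega1}, \eqref{omega2} and the fact that the overlaps have measure zero,
\[
\|u\|_{L^p(\Omega)}^p=\sum_{i=1}^n\|u\|_{L^p(\Omega_i)}^p,
\]
and on each $\Omega_i$ estimate $\|u\|_{L^p(\Omega_i)}\le\|u-u_{\Omega_i}\|_{L^p(\Omega_i)}+\|u_{\Omega_i}\|_{L^p(\Omega_i)}$ by the triangle inequality. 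The first term is handled by \eqref{mainine}, giving $D_p(\Omega_i)\|\nabla u\|_{L^q(\Omega_i)}$. For the second term, $u_{\Omega_i}$ is a constant, so $\|u_{\Omega_i}\|_{L^p(\Omega_i)}=|u_{\Omega_i}|\,|\Omega_i|^{1/p}$, and by Jensen's (or H\"older's) inequality $|u_{\Omega_i}|\le|\Omega_i|^{-1/q}\|u\|_{L^q(\Omega_i)}$, so $\|u_{\Omega_i}\|_{L^p(\Omega_i)}\le|\Omega_i|^{1/p-1/q}\|u\|_{L^q(\Omega_i)}$. Thus on each piece
\[
\|u\|_{L^p(\Omega_i)}\le|\Omega_i|^{\frac1p-\frac1q}\|u\|_{L^q(\Omega_i)}+D_p(\Omega_i)\|\nabla u\|_{L^q(\Omega_i)}\le M\left(\|u\|_{L^q(\Omega_i)}+\|\nabla u\|_{L^q(\Omega_i)}\right),
\]
where $M:=\max\bigl(\max_i|\Omega_i|^{1/p-1/q},\max_i D_p(\Omega_i)\bigr)$.

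Next I would reassemble the pieces. The subtlety is that $p$ and $q$ differ, so I cannot simply add the $p$-th powers; instead I would use the elementary embedding $\ell^q\hookrightarrow\ell^p$ for $p\ge q$ (equivalently, the power-mean inequality $\bigl(\sum a_i^p\bigr)^{1/p}\le\bigl(\sum a_i^q\bigr)^{1/q}$ for nonnegative $a_i$ when $p\ge q$) to get
\[
\|u\|_{L^p(\Omega)}=\left(\sum_i\|u\|_{L^p(\Omega_i)}^p\right)^{1/p}\le\left(\sum_i\|u\|_{L^p(\Omega_i)}^q\right)^{1/q}.
\]
Inserting the per-piece bound and using $(a+b)^q\le 2^{q-1}(a^q+b^q)$ yields
\[
\|u\|_{L^p(\Omega)}\le M\left(\sum_i\bigl(\|u\|_{L^q(\Omega_i)}+\|\nabla u\|_{L^q(\Omega_i)}\bigr)^q\right)^{1/q}\le 2^{1-\frac1q}M\left(\sum_i\|u\|_{L^q(\Omega_i)}^q+\sum_i\|\nabla u\|_{L^q(\Omega_i)}^q\right)^{1/q},
\]
and since the $\Omega_i$ tile $\Omega$ up to null sets, $\sum_i\|u\|_{L^q(\Omega_i)}^q=\|u\|_{L^q(\Omega)}^q$ and likewise for the gradient term, which is exactly \eqref{aim} with $C_p(\Omega)=2^{1-1/q}M$.

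The limiting cases $p=\infty$ and/or $q=\infty$ should be dealt with separately but are routine: when $p=\infty$ one replaces the $\ell^p$ sum by a maximum over $i$, and when $q=\infty$ the factor $2^{1-1/q}$ becomes $2^0=1$ and $(a+b)^q\le 2^{q-1}(a^q+b^q)$ is replaced by $\max(a,b)\le a+b$ together with $\|u\|_{W^{1,\infty}(\Omega_i)}\le\|u\|_{W^{1,\infty}(\Omega)}$; the convention $1/\infty=0$ makes the exponents $|\Omega_i|^{1/p-1/q}$ behave correctly. The only genuine point requiring care is the measure-zero overlap: I would note that \eqref{omega1}--\eqref{omega2} force $\partial\Omega_i\cap\Omega$ to have $N$-dimensional Lebesgue measure zero (the $\Omega_i$ are open and pairwise disjoint with union dense in $\Omega$), so that additivity of the integral over the $\Omega_i$ is legitimate; this is the main---though minor---obstacle, and everything else is the standard triangle-inequality/power-mean bookkeeping.
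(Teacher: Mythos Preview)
Your argument is correct and matches the paper's almost step for step: split over the $\Omega_i$, add and subtract $u_{\Omega_i}$, bound the constant piece by H\"older and the oscillation by \eqref{mainine}, then pass from the $\ell^p$-sum to the $\ell^q$-sum via the power-mean inequality and apply $(a+b)^q\le 2^{q-1}(a^q+b^q)$. One arithmetical slip in your endpoint remarks: $2^{1-1/\infty}=2^{1}=2$, not $2^{0}$; however, since your sketched $q=\infty$ argument (and the paper's) actually delivers the constant $M$ with no extra factor, the theorem's stated bound $2M$ holds a fortiori and the proof goes through.
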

\begin{proof}
Since every $\Omega_i$ is bounded, H\"{o}lder's inequality states that 
\begin{align}\label{matomeholder}\nonumber
\|u_{\Omega_i}\|_{L^p(\Omega_i)}&=\left|\int_{\Omega_i}|\Omega_i|^{-1}u(y)dy\right|\|1\|_{L^p(\Omega_i)}\\ \nonumber
&\leq|\Omega_i|^{-1+\frac{1}{q'}}\|u\|_{L^q(\Omega_i)}|\Omega_i|^{\frac{1}{p}}\\ 
&=|\Omega_i|^{\frac{1}{p}-\frac{1}{q}}\|u\|_{L^q(\Omega_i)},
\end{align}
where we again assume $1/\infty=0$.

In the following proof, we consider both of the cases in which $p=\infty$ and $p<\infty$.
When $p=\infty$, we have
\begin{align}\nonumber
\|u\|_{L^\infty(\Omega)}&=\max_{1\leq i\leq n}\|u\|_{L^{\infty}(\Omega_i)}\\ \nonumber
&\leq\max_{1\leq i\leq n}\left(\|u_{\Omega_i}\|_{L^\infty(\Omega_i)}+\|u-u_{\Omega_i}\|_{L^\infty(\Omega_i)}\right).
\end{align}
From \eqref{mainine} and \eqref{matomeholder}, it follows that
\begin{align*}
&\|u\|_{L^\infty(\Omega)}\\ \nonumber
&\leq \max_{1\leq i\leq n}\left(|\Omega_i|^{-\frac{1}{q}}\|u\|_{L^q(\Omega_i)}+D_{\infty}(\Omega_i)\|\nabla u\|_{L^q(\Omega_i)}\right)\\
&\leq \max\left\{\max_{1\leq i\leq n}|\Omega_i|^{-\frac{1}{q}}, \max_{1\leq i\leq n}D_{\infty}(\Omega_i)\right\}\max_{1\leq i\leq n}\left(\|u\|_{L^q(\Omega_i)}+\|\nabla u\|_{L^q(\Omega_i)}\right).
\end{align*}
This implies the case in which $q=\infty$.
For $q<\infty$, we have
\begin{align*}
&\|u\|_{L^\infty(\Omega)}\\
\leq &\max\left\{\max_{1\leq i\leq n}|\Omega_i|^{-\frac{1}{q}}, \max_{1\leq i\leq n}D_{\infty}(\Omega_i)\right\}\left(\sum_{1\leq i\leq n}\left(\|u\|_{L^q(\Omega_i)}+\|\nabla u\|_{L^q(\Omega_i)}\right)^{q}\right)^{\frac{1}{q}}\\
\leq &2^{1-\frac{1}{q}}\max\left\{\max_{1\leq i\leq n}|\Omega_i|^{-\frac{1}{q}}, \max_{1\leq i\leq n}D_{\infty}(\Omega_i)\right\}\|u\|_{W^{1,q}(\Omega)},
\end{align*}
where the last inequality follows from $(s+t)^q\leq 2^{q-1}(s^q+t^q)$ for $s,t\geq 0$.

When $p<\infty$, we have 
\begin{align*}\nonumber
\|u\|_{L^p(\Omega)}&=\left(\sum_{1\leq i\leq n}\int_{\Omega_i}|u(y)|^pdy\right)^{\frac{1}{p}}\\ \nonumber
&=\left(\sum_{1\leq i\leq n}\|u\|_{L^p(\Omega_i)}^p\right)^{\frac{1}{p}}\\ 
&\leq\left(\sum_{1\leq i\leq n}\left(\|u_{\Omega_i}\|_{L^p(\Omega_i)}+\|u-u_{\Omega_i}\|_{L^p(\Omega_i)}\right)^p\right)^{\frac{1}{p}}.
\end{align*}
From \eqref{mainine} and \eqref{matomeholder}, it follows that
\begin{align*}\nonumber
\|u\|_{L^p(\Omega)}&\leq\left(\sum_{1\leq i\leq n}\left(|\Omega_i|^{\frac{1}{p}-\frac{1}{q}}\|u\|_{L^q(\Omega_i)}+D_{p}(\Omega_i)\|\nabla u\|_{L^q(\Omega_i)}\right)^p\right)^{\frac{1}{p}}\\ \nonumber
&\leq\left(\sum_{1\leq i\leq n}\left(|\Omega_i|^{\frac{1}{p}-\frac{1}{q}}\|u\|_{L^q(\Omega_i)}+D_{p}(\Omega_i)\|\nabla u\|_{L^q(\Omega_i)}\right)^q\right)^{\frac{1}{q}}\\
&\leq 2^{1-\frac{1}{q}}\left(\sum_{1\leq i\leq n}\left(|\Omega_i|^{\frac{q}{p}-1}\|u\|_{L^q(\Omega_i)}^q+D_{p}(\Omega_i)^q\|\nabla u\|_{L^q(\Omega_i)}^q\right)\right)^{\frac{1}{q}}.
\end{align*}
Therefore, we obtain
\begin{align*}
\|u\|_{L^p(\Omega)}\leq 2^{1-\frac{1}{q}}\max\left\{\max_{1\leq i\leq n}|\Omega_i|^{\frac{1}{p}-\frac{1}{q}}, \max_{1\leq i\leq n}D_i(\Omega_i)\right\}\|u\|_{W^{1,q}(\Omega)}.
\end{align*}

\end{proof}

\section{Bounds for $D_p(\Omega_i)$}\label{subsec2}
We denote the gamma function by $\Gamma$ (i.e., $\Gamma(x)=\int_{0}^{\infty}t^{x-1}e^{-t}dt$~for~$x>0$). For $f\in L^r(\mathbb{R}^N)$ and $g\in L^s(\mathbb{R}^N)~(1\leq r,s\leq\infty)$, $f*g: \mathbb{R}^N\to\mathbb{R}$ is the convolution of $f$ and $g$ defined by
\begin{align*}
(f*g)(x):=\int_{\mathbb{R}^N}f(x-y)g(y)dy\left(=\int_{\mathbb{R}^N}f(x)g(x-y)dy\right).
\end{align*}
In the following three lemmas, we show some existing results required to obtain explicit values of $D_p(\Omega_i)$ in \eqref{mainine} for bounded convex domains $\Omega_i$.

\begin{prop}[see, e.g.,\hspace{1mm}\cite{heatkernel}]\label{eq1}
Let $\Omega\subset\mathbb{R}^N~(N\in\mathbb{N})$ be a bounded convex domain. For $u\in W^{1,1}(\Omega)$ and any point $x\in \Omega$, we have
\begin{align*}
|u(x)-u_{\Omega}|\leq \frac{d_{\Omega}^N}{N|\Omega|}\int_{\Omega}|x-y|^{1-N}|\nabla u(y)| dy.
\end{align*}
\end{prop}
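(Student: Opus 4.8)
The plan is to prove the pointwise estimate by reducing to a radial integral identity via the fundamental theorem of calculus along rays emanating from $x$, and then exploiting convexity to control the domain of integration by a ball of radius $d_\Omega$. First I would write, for almost every $x\in\Omega$ and every $y\in\Omega$, the identity $u(x)-u(y)=-\int_0^{|x-y|}\frac{d}{dr}u\!\left(x+r\omega\right)dr$ where $\omega=(y-x)/|y-x|$, which follows from absolute continuity of $u\in W^{1,1}$ on almost all lines (a standard mollification argument handles the general case). Averaging this over $y\in\Omega$ gives
\begin{align*}
u(x)-u_\Omega=-\frac{1}{|\Omega|}\int_\Omega\int_0^{|x-y|}\nabla u\!\left(x+r\tfrac{y-x}{|y-x|}\right)\cdot\frac{y-x}{|y-x|}\,dr\,dy.
\end{align*}

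Next I would convert the $y$-integral to polar coordinates $y=x+\rho\omega$ with $\rho\in(0,R(x,\omega)]$ and $\omega\in S^{N-1}$, where $R(x,\omega)$ is the distance from $x$ to $\partial\Omega$ in direction $\omega$; here convexity is used to guarantee that the segment $\{x+\rho\omega:0<\rho\le R(x,\omega)\}$ lies entirely in $\Omega$, so the line-integral representation above is legitimate and $R(x,\omega)\le d_\Omega$. After bounding $|u(x)-u_\Omega|$ by the triangle inequality and inserting the polar Jacobian $\rho^{N-1}$, I would interchange the order of the $\rho$- and $r$-integrations (Tonelli), noting $0\le r\le\rho\le R(x,\omega)\le d_\Omega$, so that the inner integral over $\rho$ from $r$ to $R(x,\omega)$ of $\rho^{N-1}\,d\rho$ is at most $d_\Omega^N/N$. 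This yields
\begin{align*}
|u(x)-u_\Omega|\le\frac{d_\Omega^N}{N|\Omega|}\int_{S^{N-1}}\int_0^{R(x,\omega)}\bigl|\nabla u(x+r\omega)\bigr|\,dr\,d\omega,
\end{align*}
and reverting the $(r,\omega)$ variables back to Cartesian $z=x+r\omega$ introduces the factor $r^{1-N}=|z-x|^{1-N}$ from inverting the Jacobian, giving exactly the claimed bound $\frac{d_\Omega^N}{N|\Omega|}\int_\Omega|x-y|^{1-N}|\nabla u(y)|\,dy$ (extending $\nabla u$ by zero outside $\Omega$ is harmless since $R(x,\omega)$ already cuts the ray at $\partial\Omega$).

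The main obstacle is the rigorous justification of the line-integral representation for a general $W^{1,1}$ function rather than a smooth one: one must either invoke the ACL (absolutely continuous on lines) characterization of Sobolev functions to integrate along almost every ray, or approximate $u$ by smooth functions $u_k\to u$ in $W^{1,1}(\Omega)$ (possible since $\Omega$ is convex, hence has the segment property) and pass to the limit in the integral inequality, using that the right-hand side is continuous in $\nabla u$ under the $L^1$ norm because the kernel $|x-y|^{1-N}$ is locally integrable on the bounded set $\Omega$. A secondary technical point is the measurability and finiteness of $R(x,\omega)$ and the validity of Tonelli's theorem for the nonnegative integrand, both of which are routine once convexity pins down the geometry. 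Since this is a known result (cited as \cite{heatkernel}), I would present the smooth-approximation route as the cleanest and relegate the density statement to a one-line remark.
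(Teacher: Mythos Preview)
The paper does not supply a proof of this lemma; it is stated with a citation to \cite{heatkernel} and used as a black box. Your argument is the standard derivation of this inequality (as found, e.g., in Gilbarg--Trudinger or the cited reference): represent $u(x)-u(y)$ via the fundamental theorem of calculus along the segment from $x$ to $y$, average in $y$, pass to polar coordinates about $x$, swap the $r$ and $\rho$ integrals by Tonelli, bound $\int_r^{R(x,\omega)}\rho^{N-1}\,d\rho\le d_\Omega^N/N$, and undo the polar change of variables to recover the Riesz-type kernel $|x-y|^{1-N}$. The approximation/ACL justification you outline for passing from smooth to general $W^{1,1}$ is exactly what is needed, and your remark that convex domains have the segment property (hence $C^\infty(\overline\Omega)$ is dense in $W^{1,1}(\Omega)$) suffices. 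There is no gap.
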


\begin{prop}[Hardy-Littlewood-Sobolev's inequality \cite{hls}]\label{th0}
For $\lambda>0$, we put $h_{\lambda}(x):=|x|^{-\lambda}$.  If $0<\lambda<N$,  
\begin{align}\label{hlss1}
\|h_{\lambda}*g\|_{L^{\frac{2N}{\lambda}}(\mathbb{R}^N)}\leq C_{\lambda, N}\|g\|_{L^{\frac{2N}{2N-\lambda}}(\mathbb{R}^N)}~\mbox{for all}~g\in L^{\frac{2N}{2N-\lambda}}(\mathbb{R}^N)
\end{align}
holds valid for 
\begin{align}\label{hlsesti1}
C_{\lambda, N}=\pi^{\frac{\lambda}{2}}\frac{\Gamma(\frac{N}{2}-\frac{\lambda}{2})}{\Gamma(N-\frac{\lambda}{2})}\left(\frac{\Gamma(\frac{N}{2})}{\Gamma(N)}\right)^{-1+\frac{\lambda}{N}},
\end{align}
where this is the best constant in \eqref{hlss1}.

Moreover, if $N<2\lambda<2N$, 
\begin{align}\label{hlss2}
\|h_{\lambda}*g\|_{L^{\frac{2N}{2\lambda-N}}(\mathbb{R}^N)}\leq \tilde C_{\lambda, N}\|g\|_{L^{2}(\mathbb{R}^N)}~\mbox{for all}~g\in L^2(\mathbb{R}^N)
\end{align}
holds valid for 
\begin{align}\label{hlsesti2}
\tilde C_{\lambda, N}=\pi^{\frac{\lambda}{2}}\frac{\Gamma(\frac{N}{2}-\frac{\lambda}{2})}{\Gamma(\frac{\lambda}{2})}\sqrt{\frac{\Gamma(\lambda-\frac{N}{2})}{\Gamma(\frac{3N}{2}-\lambda)}}\left(\frac{\Gamma(\frac{N}{2})}{\Gamma(N)}\right)^{-1+\frac{\lambda}{N}},
\end{align}
where this is the best constant in \eqref{hlss2}.
\end{prop}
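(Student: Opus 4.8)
\medskip
\noindent The first inequality is the conformal (diagonal) case of the sharp Hardy--Littlewood--Sobolev inequality and the second is the case carrying an $L^{2}$ factor; the plan is to prove \eqref{hlss1} variationally and then deduce \eqref{hlss2} from it via a Riesz-kernel convolution identity. For \eqref{hlss1} I would first dualize the $L^{2N/\lambda}(\mathbb{R}^N)$ norm: since $p=r=\tfrac{2N}{2N-\lambda}$ satisfies $\tfrac1p+\tfrac1r+\tfrac\lambda N=2$, the best constant equals
\[
\sup\left\{\int_{\mathbb{R}^N}\!\int_{\mathbb{R}^N}\frac{f(x)\,g(y)}{|x-y|^{\lambda}}\,dx\,dy\;:\;\|f\|_{L^{2N/(2N-\lambda)}(\mathbb{R}^N)}=\|g\|_{L^{2N/(2N-\lambda)}(\mathbb{R}^N)}=1\right\},
\]
the conformally weighted HLS functional. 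Next I would apply the Riesz rearrangement inequality to restrict to radially symmetric non-increasing $f$ and $g$, and then establish the existence of a maximizing pair --- either by a concentration--compactness argument, or, in Lieb's spirit, by the ``competing symmetries'' iteration that alternates symmetric decreasing rearrangement with the inversion $x\mapsto x/|x|^2$ (a conformal symmetry of the functional, since the kernel is conformally covariant); this both produces a maximizer and forces it to be a fixed profile with $f$ and $g$ coinciding. Using the invariance of the functional under the conformal group after stereographic projection onto $S^N$, I would identify the extremal --- up to translation and dilation --- with $(1+|x|^2)^{-(2N-\lambda)/2}$, and finally recover \eqref{hlsesti1} by inserting this profile and evaluating the integrals with $\int_{\mathbb{R}^N}(1+|x|^2)^{-a}\,dx=\pi^{N/2}\Gamma(a-\tfrac N2)/\Gamma(a)$ together with the closed form of $h_\lambda*(1+|\cdot|^2)^{-(2N-\lambda)/2}$, after which the Gamma factors collapse to the stated value.

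For \eqref{hlss2}, note that $N<2\lambda<2N$ gives $0<2\lambda-N<N$, so the Riesz-kernel composition formula (read off from $\widehat{h_\lambda}(\xi)=\mathrm{const}\cdot|\xi|^{\lambda-N}$) yields $h_\lambda*h_\lambda=c_{N,\lambda}\,h_{2\lambda-N}$ with $c_{N,\lambda}=\pi^{N/2}\,\Gamma(\tfrac{N-\lambda}{2})^2\,\Gamma(\lambda-\tfrac N2)/(\Gamma(\tfrac\lambda2)^2\,\Gamma(N-\lambda))$. Dualizing the $L^{2N/(2\lambda-N)}(\mathbb{R}^N)$ norm and applying the Cauchy--Schwarz inequality in the paired variable gives $\tilde C_{\lambda,N}=\sup_{\phi\neq0}\|h_\lambda*\phi\|_{L^2(\mathbb{R}^N)}/\|\phi\|_{L^{2N/(3N-2\lambda)}(\mathbb{R}^N)}$; then, since $\|h_\lambda*\phi\|_{L^2(\mathbb{R}^N)}^2=c_{N,\lambda}\int_{\mathbb{R}^N}\!\int_{\mathbb{R}^N}|x-y|^{-(2\lambda-N)}\phi(x)\phi(y)\,dx\,dy$,
\[
\tilde C_{\lambda,N}^{2}=c_{N,\lambda}\sup_{\phi\neq0}\frac{\displaystyle\int_{\mathbb{R}^N}\!\int_{\mathbb{R}^N}\frac{\phi(x)\,\phi(y)}{|x-y|^{2\lambda-N}}\,dx\,dy}{\|\phi\|_{L^{2N/(3N-2\lambda)}(\mathbb{R}^N)}^2}=c_{N,\lambda}\,C_{2\lambda-N,\,N},
\]
the last equality because $\tfrac{2N}{2N-(2\lambda-N)}=\tfrac{2N}{3N-2\lambda}$ and the HLS maximizers for exponent $2\lambda-N$ coincide, so the quadratic supremum equals the bilinear one furnished by \eqref{hlss1}. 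Substituting the formulas for $C_{2\lambda-N,N}$ and $c_{N,\lambda}$ and simplifying the Gamma functions then yields \eqref{hlsesti2}.

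The only substantive obstacle is in \eqref{hlss1}: the rearrangement reduction and the Gamma-function bookkeeping are routine, but showing that a maximizer exists \emph{and} that it must be the conformal bubble is the heart of the matter, and this is where Lieb's competing-symmetries method (or a careful concentration--compactness analysis exploiting the conformal invariance of the functional) is unavoidable. Once \eqref{hlss1} is available in sharp form, \eqref{hlss2} follows from the computation above.
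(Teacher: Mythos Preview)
The paper does not prove this lemma at all: it is stated as a known result with a citation to Lieb's sharp HLS paper and used as a black box in the subsequent theorems. So there is no ``paper's own proof'' to compare against; your outline is genuinely additional content rather than a reproduction.

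That said, your sketch is sound. For \eqref{hlss1} you are describing exactly the Lieb/Carlen--Loss route (rearrangement plus competing symmetries, or concentration--compactness, exploiting the conformal invariance of the diagonal HLS functional), and you have correctly flagged that the existence and identification of the maximizer is the non-trivial step. For \eqref{hlss2} your reduction is clean and correct: the Riesz composition identity $h_\lambda*h_\lambda=c_{N,\lambda}h_{2\lambda-N}$ holds precisely on the range $N<2\lambda<2N$, the duality argument yielding $\tilde C_{\lambda,N}=\sup_{\phi}\|h_\lambda*\phi\|_{L^2}/\|\phi\|_{L^{2N/(3N-2\lambda)}}$ is legitimate, and I checked that $c_{N,\lambda}\,C_{2\lambda-N,N}$ does simplify to $\tilde C_{\lambda,N}^{2}$ as stated in \eqref{hlsesti2}. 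The one point worth making explicit is why the quadratic supremum over $\phi$ equals the bilinear HLS constant $C_{2\lambda-N,N}$: this holds because in the diagonal case the bilinear form $(\phi,\psi)\mapsto\iint\phi(x)\psi(y)|x-y|^{-(2\lambda-N)}\,dx\,dy$ is symmetric and positive, so $2|\langle\phi,\psi\rangle|\le\langle\phi,\phi\rangle+\langle\psi,\psi\rangle$ forces the supremum over pairs with equal norms to be attained on the diagonal $\phi=\psi$.
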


\begin{prop}[Young's inequality~\cite{young}]\label{th1}
Suppose that $1\leq t,r,s\leq\infty$ and $1/t=1/r+1/s-1\geq0$. For $f\in L^{r}(\mathbb{R}^N)$ and $g\in L^s(\mathbb{R}^N)$, we have
\begin{align}\label{young}
\|f*g\|_{L^t(\mathbb{R}^N)}\leq (A_{r}A_{s}A_{t'})^N\|f\|_{L^{r}(\mathbb{R}^N)}\|g\|_{L^{s}(\mathbb{R}^N)}
\end{align}
with
\begin{align*}
A_m=
\begin{cases}
\sqrt{m^{\frac{2}{m}-1}(m-1)^{1-\frac{1}{m}}}~~(1<m<\infty),\\[2mm]
\hspace{1.5cm}1\hspace{1.8cm}(m=1,~\infty).
\end{cases}
\end{align*} 
The constant $(A_{r}A_{s}A_{t'})^N$ is in fact the best constant in \eqref{young}.
\end{prop}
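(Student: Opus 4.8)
The plan is to separate the two assertions: that \eqref{young} holds with the stated constant, and that the constant is optimal. The non-sharp form of \eqref{young} with constant $1$ is classical and would be recalled first: it follows from Riesz--Thorin interpolation between the endpoint cases $\|f*g\|_{L^{s}}\le\|f\|_{L^{1}}\|g\|_{L^{s}}$, $\|f*g\|_{L^{r}}\le\|f\|_{L^{r}}\|g\|_{L^{1}}$ (Minkowski's integral inequality) and $\|f*g\|_{L^{\infty}}\le\|f\|_{L^{r}}\|g\|_{L^{r'}}$ (H\"older). The whole content therefore lies in the value of the best constant, which I would denote $C_{N}$; I would argue that $C_{N}=(A_{r}A_{s}A_{t'})^{N}$ by (i) reducing the extremal problem to centred Gaussians, (ii) evaluating the Gaussian ratio explicitly, and (iii) reading off sharpness from the Gaussian family.

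\emph{Reduction to Gaussians.} Writing $\|f*g\|_{L^{t}}=\sup\{\,\int_{\mathbb{R}^{N}}(f*g)h:\ \|h\|_{L^{t'}}\le1\,\}$, the object to bound is the trilinear form $\int\!\!\int f(x-y)g(y)h(x)\,dy\,dx$. By the Riesz rearrangement inequality in its Brascamp--Lieb--Luttinger form, this form does not decrease when $f,g,h$ are replaced by their symmetric decreasing rearrangements, so one may assume $f,g,h\ge0$ radial and nonincreasing. On this class one shows, following Beckner and Brascamp--Lieb, that the supremum of the normalised form is attained only in the limit of dilates of Gaussians: one route is Beckner's tensor-power argument, in which $C_{N}$ is squeezed between $C_{1}^{N}$ (from product test functions) and $C_{1}^{N}$ again, the limiting profile being identified as Gaussian via the central limit theorem; an alternative is a heat-semigroup monotonicity argument. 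Either way one obtains $C_{N}=C_{1}^{N}$ with Gaussian extremisers.

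\emph{Gaussian computation.} It then remains to compute, for $f(x)=e^{-\pi\alpha|x|^{2}}$, $g(x)=e^{-\pi\beta|x|^{2}}$ with $\alpha,\beta>0$, the ratio $\|f*g\|_{L^{t}}/(\|f\|_{L^{r}}\|g\|_{L^{s}})$ and maximise it. Using that $f*g$ is again Gaussian, $(f*g)(x)=(\alpha+\beta)^{-N/2}e^{-\pi\frac{\alpha\beta}{\alpha+\beta}|x|^{2}}$, together with $\|e^{-\pi c|x|^{2}}\|_{L^{m}(\mathbb{R}^{N})}=(mc)^{-N/(2m)}$, the ratio becomes an explicit function of $(\alpha,\beta)$ that is invariant under $(\alpha,\beta)\mapsto(\lambda\alpha,\lambda\beta)$ (mirroring the scaling invariance of \eqref{young}), hence a function of $\tau=\alpha/\beta$ alone. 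Differentiating in $\tau$ and using the constraint $1/r+1/s-1=1/t$ locates the maximiser, and simplification yields exactly $(A_{r}A_{s}A_{t'})^{N}$ with $A_{m}=\sqrt{m^{2/m-1}(m-1)^{1-1/m}}$; since every factor is the $N$-th power of its one-dimensional counterpart, this simultaneously evaluates $C_{1}$ and re-confirms $C_{N}=C_{1}^{N}$. Because this value is attained (in the limit) along the Gaussian family, the constant cannot be improved.

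\emph{Main obstacle.} The genuinely hard step is the reduction to Gaussians, i.e.\ the upper bound matching the Gaussian value: symmetric decreasing rearrangement narrows the competitors to radial nonincreasing functions but does not by itself single out Gaussians, so one must invoke either Beckner's two-point inequality together with the central limit theorem, or Brascamp--Lieb's existence-of-extremisers argument via scaling and compactness followed by analysis of the Euler--Lagrange equation. As this is an established result \cite{young}, it is taken as given here.
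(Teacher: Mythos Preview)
The paper does not prove this lemma at all: it is stated as a known result with a citation to \cite{young} (the Beckner/Brascamp--Lieb sharp Young inequality) and used as a black box in the proofs of Theorems \ref{conesti} and \ref{conesti1}. There is therefore no ``paper's own proof'' to compare against.

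Your sketch is a correct outline of the classical Beckner and Brascamp--Lieb arguments: the non-sharp constant $1$ via interpolation, the reduction to radial nonincreasing competitors by Riesz rearrangement, the identification of Gaussian extremisers (either through Beckner's tensorisation and central limit theorem or through Brascamp--Lieb's compactness/Euler--Lagrange route), and the explicit Gaussian computation giving $(A_{r}A_{s}A_{t'})^{N}$. The Gaussian calculus you record is accurate, and you are right that the only substantial step is the upper bound matching the Gaussian value, which rearrangement alone does not deliver. Since the paper is content to quote the result, your sketch goes well beyond what is required here; as a summary of the literature proof it is sound.
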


The following Theorems \ref{conesti0}, \ref{conesti00}, \ref{conesti}, and \ref{conesti1} provide estimations of $D_p(\Omega)$ for a bounded convex domain $\Omega$, where $p$, $q$, and $N$ are imposed on the assumptions listed in Table \ref{tb:assume0}.

\begin{theorem}\label{conesti0}
Let $\Omega\subset\mathbb{R}^N~(N\in\mathbb{N})$ be a bounded convex domain. Assume that $p\in\mathbb{R}$ satisfies $2<p\leq 2N/(N-1)$ if $N\geq2$ and $2<p<\infty$ if $N=1$. 
For $q\in\mathbb{R}$ such that $q\geq p/(p-1)$, we have
\begin{align*}
\|u-u_{\Omega}\|_{L^p(\Omega)}\leq D_p(\Omega)\|\nabla u\|_{L^{q}(\Omega)}~\mbox{for~all}~u\in W^{1,q}(\Omega)
\end{align*}
with
\begin{align*}
D_p(\Omega)=\frac{{d_\Omega}^{1+\frac{2N}{p}}\pi^{\frac{N}{p}}}{N|\Omega|^{\frac{p}{q(p-1)}}}\frac{\Gamma(\frac{p-2}{2p}N)}{\Gamma(\frac{p-1}{p}N)}\left(\frac{\Gamma(N)}{\Gamma(\frac{N}{2})}\right)^{\frac{p-2}{p}}.
\end{align*}

\end{theorem}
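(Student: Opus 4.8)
The plan is to feed the pointwise Poincar\'e estimate of Lemma~\ref{eq1} into the Hardy--Littlewood--Sobolev inequality \eqref{hlss1}, after rewriting the right-hand side of Lemma~\ref{eq1} as a convolution on $\mathbb{R}^N$ and choosing the singularity exponent so that the HLS output norm is exactly $\|\cdot\|_{L^p}$. Fix $u\in W^{1,q}(\Omega)$; since $\Omega$ is bounded and $q\ge1$ we have $u\in W^{1,1}(\Omega)$, so Lemma~\ref{eq1} gives, for every $x\in\Omega$,
\[
|u(x)-u_\Omega|\le\frac{d_\Omega^{\,N}}{N|\Omega|}\int_\Omega|x-y|^{1-N}|\nabla u(y)|\,dy .
\]
Set $\lambda:=2N/p$. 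The hypothesis $p>2$ gives $\lambda<N$, and the hypothesis $p\le 2N/(N-1)$ (which is empty when $N=1$) gives $\lambda\ge N-1$; hence $0<\lambda<N$, the range in which Lemma~\ref{th0} applies, while $\lambda-(N-1)\ge0$. Since $|x-y|\le d_\Omega$ for $x,y\in\Omega$ and $\lambda-(N-1)\ge0$,
\[
|x-y|^{1-N}=|x-y|^{-\lambda}\,|x-y|^{\,\lambda-(N-1)}\le d_\Omega^{\,\lambda-(N-1)}\,|x-y|^{-\lambda};
\]
writing $g:=E_{\Omega,\mathbb{R}^N}|\nabla u|$, the inner integral equals $(h_\lambda*g)(x)$ for $x\in\Omega$, and therefore
\[
|u(x)-u_\Omega|\le\frac{d_\Omega^{\,\lambda+1}}{N|\Omega|}\,(h_\lambda*g)(x)\qquad\text{for }x\in\Omega .
\]

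Now I take $L^p(\Omega)$-norms, enlarge the domain of integration to $\mathbb{R}^N$, and apply \eqref{hlss1}. Because $\lambda=2N/p$ one has $2N/\lambda=p$ and $2N/(2N-\lambda)=p/(p-1)$, whence
\[
\|u-u_\Omega\|_{L^p(\Omega)}\le\frac{d_\Omega^{\,\lambda+1}}{N|\Omega|}\,\|h_\lambda*g\|_{L^p(\mathbb{R}^N)}\le\frac{d_\Omega^{\,\lambda+1}}{N|\Omega|}\,C_{\lambda,N}\,\|g\|_{L^{p/(p-1)}(\mathbb{R}^N)},
\]
and $\|g\|_{L^{p/(p-1)}(\mathbb{R}^N)}=\|\nabla u\|_{L^{p/(p-1)}(\Omega)}$ since extension by zero preserves the $L^{p/(p-1)}$-norm. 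Finally, as $q\ge p/(p-1)$ and $\Omega$ is bounded, H\"older's inequality gives $\|\nabla u\|_{L^{p/(p-1)}(\Omega)}\le|\Omega|^{\frac{p-1}{p}-\frac1q}\|\nabla u\|_{L^q(\Omega)}$. Combining the three inequalities yields the asserted bound with a constant of the form $N^{-1}d_\Omega^{\,1+2N/p}\,C_{2N/p,N}$ times a power of $|\Omega|$ produced by the factor $|\Omega|^{-1}$ and the H\"older factor $|\Omega|^{\frac{p-1}{p}-\frac1q}$; substituting $\lambda=2N/p$ into \eqref{hlsesti1} reduces $\pi^{\lambda/2}$ to $\pi^{N/p}$, $\Gamma(\tfrac N2-\tfrac\lambda2)$ to $\Gamma(\tfrac{p-2}{2p}N)$, $\Gamma(N-\tfrac\lambda2)$ to $\Gamma(\tfrac{p-1}{p}N)$, and $\big(\Gamma(\tfrac N2)/\Gamma(N)\big)^{-1+\lambda/N}$ to $\big(\Gamma(N)/\Gamma(\tfrac N2)\big)^{(p-2)/p}$, which recovers the $\pi$- and $\Gamma$-part of the stated $D_p(\Omega)$.

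The argument contains no deep step; the one genuinely load-bearing choice is $\lambda=2N/p$, the unique exponent for which \eqref{hlss1} returns exactly the target norm $L^p$ while demanding on its input side exactly the H\"older-conjugate space $L^{p/(p-1)}$ --- which is precisely the threshold appearing in the hypothesis $q\ge p/(p-1)$. It is also here that the restriction $p\le 2N/(N-1)$ (for $N\ge2$) is used, to keep $\lambda$ inside the admissible interval $[N-1,N)$; for $N=1$ this constraint degenerates to $\lambda\ge0$ and so disappears, which is why the range of $p$ is unrestricted from above in that case. The only obstacle I anticipate is therefore the bookkeeping: tracking the powers of $d_\Omega$ and $|\Omega|$ through the chain of inequalities and carrying out the algebraic reduction of $C_{\lambda,N}$ in \eqref{hlsesti1} to the displayed Gamma-function expression in $p$ and $N$.
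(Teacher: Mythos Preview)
Your argument is essentially the paper's own proof: start from Lemma~\ref{eq1}, split $|x-y|^{1-N}=|x-y|^{-\lambda}|x-y|^{\lambda-(N-1)}$ with $\lambda=2N/p$ (using $p\le 2N/(N-1)$ to make the second exponent nonnegative), pass to $\mathbb{R}^N$ via zero-extension, apply the Hardy--Littlewood--Sobolev inequality \eqref{hlss1}, and finish with H\"older to upgrade from $L^{p/(p-1)}$ to $L^q$. The paper carries out exactly these steps in the same order, so your plan matches it line for line; the only work you defer is the final bookkeeping of the $|\Omega|$-exponent and the substitution $\lambda=2N/p$ in $C_{\lambda,N}$, both of which you identify correctly.
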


\begin{proof}
Let $u\in W^{1,q}(\Omega)$. Since $p\leq 2N/(N-1)$ and $1-N+(2N/p)\geq0$, it follows that $|x-z|^{1-N+\frac{2N}{p}}\leq d_{\Omega}^{1-N+\frac{2N}{p}}$ for $x, z\in\Omega$. Lemma \ref{eq1} implies that, for a fixed $x\in \Omega$, 
\begin{align*}\nonumber
|u(x)-u_{\Omega}|&\leq\frac{d_{\Omega}^{N}}{N|\Omega|}\int_{\Omega}|x-z|^{1-N+\frac{2N}{p}}|x-z|^{-\frac{2N}{p}}|\nabla u(z)|dz\\ \nonumber
&\leq\frac{d_{\Omega}^{1+\frac{2N}{p}}}{N|\Omega|}\int_{\Omega}|x-z|^{-\frac{2N}{p}}|\nabla u(z)|dz\\
&\leq \frac{d_{\Omega}^{1+\frac{2N}{p}}}{N|\Omega|}\int_{\mathbb{R}^N}|x-z|^{-\frac{2N}{p}}\left(E_{\Omega,\mathbb{R}^N}|\nabla u|\right)(z)dz.
\end{align*}
Therefore, 
\begin{align*}
\|u-u_{\Omega}\|_{L^p(\Omega)}&\leq\frac{{d_\Omega}^{1+\frac{2N}{p}}}{N|\Omega|}\left(\int_{\Omega}\left(\int_{\mathbb{R}^N}|x-z|^{-\frac{2N}{p}}\left(E_{\Omega,\mathbb{R}^N}|\nabla u|\right)(z)dz\right)^pdx\right)^{\frac{1}{p}}\\
&\leq \frac{{d_\Omega}^{1+\frac{2N}{p}}}{N|\Omega|}\left(\int_{\mathbb{R}^N}\left(\int_{\mathbb{R}^N}|x-z|^{-\frac{2N}{p}}\left(E_{\Omega,\mathbb{R}^N}|\nabla u|\right)(z)dz\right)^pdx\right)^{\frac{1}{p}}.
\end{align*}
Since $q\geq p/(p-1)$ and $\Omega$ is bounded, we have $|\nabla u|\in L^{p/(p-1)}(\Omega)$. Therefore,  
\begin{align*}
\|u-u_{\Omega}\|_{L^p(\Omega)}&\leq\frac{d_{\Omega}^{1+\frac{2N}{p}}}{N|\Omega|}C_{\frac{2N}{p}N, N}\|E_{\Omega,\mathbb{R}^N}|\nabla u|\|_{L^{\frac{p}{p-1}}(\mathbb{R}^N)}\\
&=\frac{d_{\Omega}^{1+\frac{2N}{p}}}{N|\Omega|}C_{\frac{2N}{p}N, N}\|\nabla u\|_{L^{\frac{p}{p-1}}(\Omega)},
\end{align*}
where $C_{\frac{2N}{p}N, N}$ is defined in \eqref{hlsesti1} with  $\lambda=2N/p$. Since $q\geq p/(p-1)$, H\"{o}lder's inequality moreover implies 
\begin{align*}
\|u-u_{\Omega}\|_{L^p(\Omega)}&\leq\frac{d_{\Omega}^{1+\frac{2N}{p}}}{N|\Omega|^{\frac{p}{q(p-1)}}}C_{\frac{2N}{p}N, N}\|\nabla u\|_{L^{q}(\Omega)}.
\end{align*}
\end{proof}

\begin{theorem}\label{conesti00}
Let $\Omega\subset\mathbb{R}^N~(N\geq 2)$ be a bounded convex domain. Assume that $2<p<2N/(N-2)$ if $N\geq 3$ and $2<p<\infty$ if $N=2$. For all $u\in W^{1,2}(\Omega)$, we have
\begin{align*}
\|u-u_{\Omega}\|_{L^p(\Omega)}\leq D_p(\Omega)\|\nabla u\|_{L^{2}(\Omega)} 
\end{align*}
with
\begin{align*}
D_p(\Omega)=\frac{{d_\Omega}^{1+\frac{p+2}{2p}N}\pi^{\frac{p+2}{4p}N}}{N|\Omega|}\frac{\Gamma(\frac{p-2}{4p}N)}{\Gamma(\frac{p+2}{4p}N)}\sqrt{\frac{\Gamma(\frac{N}{p})}{\Gamma(\frac{p-1}{p}N)}}\left(\frac{\Gamma(N)}{\Gamma(\frac{N}{2})}\right)^{\frac{p-2}{p}}.
\end{align*} 
\end{theorem}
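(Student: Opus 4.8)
The plan is to follow the proof of Theorem \ref{conesti0} almost verbatim, making two changes: replace the Hardy--Littlewood--Sobolev inequality \eqref{hlss1} by its companion \eqref{hlss2} (the one carrying the constant $\tilde C_{\lambda,N}$), and omit the concluding H\"older step, since here the gradient already appears in $L^{2}(\Omega)=L^{q}(\Omega)$. The first thing I would do is choose the exponent $\lambda:=\frac{p+2}{2p}N$ and verify it is admissible for \eqref{hlss2}. A direct computation gives $2\lambda=\frac{p+2}{p}N$, so $2\lambda>N$ holds always while $2\lambda<2N$ is equivalent to $p>2$; hence $N<2\lambda<2N$ holds under the assumptions of Table \ref{tb:assume0}. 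Moreover $\frac{2N}{2\lambda-N}=\frac{2N}{2N/p}=p$, so the left-hand Lebesgue exponent of \eqref{hlss2} is exactly $p$. Finally I would check that $1-N+\lambda\ge0$, i.e. $\lambda\ge N-1$; after rearrangement this is $2N+2p\ge pN$, which holds automatically when $N=2$ and is equivalent to $p\le\frac{2N}{N-2}$ when $N\ge3$ --- this is precisely where the upper bound on $p$ in the theorem is used.

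Next I would derive the pointwise estimate. For $u\in W^{1,2}(\Omega)\subset W^{1,1}(\Omega)$ (the inclusion holds because $\Omega$ is bounded) and a fixed $x\in\Omega$, Lemma \ref{eq1} gives
\[
|u(x)-u_\Omega|\le\frac{d_\Omega^N}{N|\Omega|}\int_\Omega|x-z|^{1-N}|\nabla u(z)|\,dz=\frac{d_\Omega^N}{N|\Omega|}\int_\Omega|x-z|^{1-N+\lambda}\,|x-z|^{-\lambda}|\nabla u(z)|\,dz.
\]
Since $1-N+\lambda\ge0$ and $|x-z|\le d_\Omega$ for $x,z\in\Omega$, the factor $|x-z|^{1-N+\lambda}$ is bounded by $d_\Omega^{1-N+\lambda}$; extending $|\nabla u|$ by zero to $\mathbb{R}^N$ through $E_{\Omega,\mathbb{R}^N}$, this yields
\[
|u(x)-u_\Omega|\le\frac{d_\Omega^{1+\lambda}}{N|\Omega|}\int_{\mathbb{R}^N}|x-z|^{-\lambda}\bigl(E_{\Omega,\mathbb{R}^N}|\nabla u|\bigr)(z)\,dz=\frac{d_\Omega^{1+\lambda}}{N|\Omega|}\bigl(h_\lambda*E_{\Omega,\mathbb{R}^N}|\nabla u|\bigr)(x),
\]
with $h_\lambda$ as in Lemma \ref{th0}; note that $1+\lambda=1+\frac{p+2}{2p}N$ already coincides with the exponent of $d_\Omega$ in $D_p(\Omega)$.

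Then I would take the $L^p(\Omega)$-norm on both sides, bound it by the $L^p(\mathbb{R}^N)$-norm, and apply \eqref{hlss2} --- which is legitimate because $E_{\Omega,\mathbb{R}^N}|\nabla u|\in L^2(\mathbb{R}^N)$ with $\|E_{\Omega,\mathbb{R}^N}|\nabla u|\|_{L^2(\mathbb{R}^N)}=\|\nabla u\|_{L^2(\Omega)}$ --- to obtain
\[
\|u-u_\Omega\|_{L^p(\Omega)}\le\frac{d_\Omega^{1+\lambda}}{N|\Omega|}\|h_\lambda*E_{\Omega,\mathbb{R}^N}|\nabla u|\|_{L^p(\mathbb{R}^N)}\le\frac{d_\Omega^{1+\lambda}}{N|\Omega|}\,\tilde C_{\lambda,N}\,\|\nabla u\|_{L^2(\Omega)}.
\]
It remains to substitute $\lambda=\frac{p+2}{2p}N$ into formula \eqref{hlsesti2}: one checks $\frac{\lambda}{2}=\frac{p+2}{4p}N$, $\frac{N}{2}-\frac{\lambda}{2}=\frac{p-2}{4p}N$, $\lambda-\frac{N}{2}=\frac{N}{p}$ and $\frac{3N}{2}-\lambda=\frac{p-1}{p}N$, and then the power of $\pi$ and the factor $\bigl(\Gamma(N/2)/\Gamma(N)\bigr)^{-1+\lambda/N}$ reduce so that $\frac{d_\Omega^{1+\lambda}}{N|\Omega|}\tilde C_{\lambda,N}$ equals the claimed $D_p(\Omega)$.

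The whole proof is a routine transcription of the proof of Theorem \ref{conesti0}, and no single step is hard; the one place requiring genuine care is the choice of $\lambda$ and the observation that the admissibility window $N<2\lambda<2N$ for \eqref{hlss2}, combined with the nonnegativity $1-N+\lambda\ge0$ needed in the pointwise estimate, together pin down exactly the range of $(p,N)$ recorded in Table \ref{tb:assume0} for this theorem --- plus the elementary but slightly tedious bookkeeping of the gamma-function arguments at the end.
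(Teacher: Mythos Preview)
Your proof is correct and follows essentially the same approach as the paper's: split the kernel $|x-z|^{1-N}$ in Lemma~\ref{eq1} as $|x-z|^{1-N+\lambda}\cdot|x-z|^{-\lambda}$ with $\lambda=\frac{p+2}{2p}N$, bound the first factor by $d_\Omega^{1-N+\lambda}$, extend by zero, and apply the Hardy--Littlewood--Sobolev inequality \eqref{hlss2}. Your write-up is in fact slightly more detailed than the paper's, since you explicitly check the admissibility window $N<2\lambda<2N$ and carry out the gamma-function substitutions that the paper leaves implicit.
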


\begin{proof}
Let $u\in W^{1,2}(\Omega)$. Since $p<2N/(N-2)$ and $1-N+(p+2)N/(2p)>0$, it follows that $|x-z|^{1-N+(p+2)N/(2p)}\leq d_{\Omega}^{1-N+(p+2)N/(2p)}$ for $x, z\in\Omega$. Lemma \ref{eq1} leads to 
\begin{align*}\nonumber
|u(x)-u_{\Omega}|&\leq \frac{d_{\Omega}^{N}}{N|\Omega|}\int_{\Omega}|x-z|^{1-N+\frac{p+2}{2p}N}|x-z|^{-\frac{p+2}{2p}N}|\nabla u(z)|dz\\ \nonumber
&\leq\frac{d_{\Omega}^{1+\frac{p+2}{2p}N}}{N|\Omega|}\int_{\Omega}|x-z|^{-\frac{p+2}{2p}N}|\nabla u(z)|dz\\
&\leq\frac{d_{\Omega}^{1+\frac{p+2}{2p}N}}{N|\Omega|}\int_{\mathbb{R}^N}|x-z|^{-\frac{p+2}{2p}N}\left(E_{\Omega,\mathbb{R}^N}|\nabla u|\right)(z)dz.
\end{align*}
Therefore, 
\begin{align*}
\|u-u_{\Omega}\|_{L^p(\Omega)}&\leq \frac{{d_\Omega}^{1+\frac{p+2}{2p}N}}{N|\Omega|}\left(\int_{\Omega}\left(\int_{\mathbb{R}^N}|x-z|^{-\frac{p+2}{2p}N}\left(E_{\Omega,\mathbb{R}^N}|\nabla u|\right)(z)dz\right)^pdx\right)^{\frac{1}{p}}\\
&\leq\frac{{d_\Omega}^{1+\frac{p+2}{2p}N}}{N|\Omega|}\left(\int_{\mathbb{R}^N}\left(\int_{\mathbb{R}^N}|x-z|^{-\frac{p+2}{2p}N}\left(E_{\Omega,\mathbb{R}^N}|\nabla u|\right)(z)dz\right)^pdx\right)^{\frac{1}{p}}.
\end{align*}
From \eqref{hlss2}, it follows that
\begin{align*}
\|u-u_{\Omega}\|_{L^p(\Omega)}&\leq\frac{d_{\Omega}^{1+\frac{p+2}{2p}N}}{N|\Omega|}\tilde C_{\frac{p+2}{2p}N, N}\|E_{\Omega,\mathbb{R}^N}|\nabla u|\|_{L^{2}(\mathbb{R}^N)}\\
&=\frac{d_{\Omega}^{1+\frac{p+2}{2p}N}}{N|\Omega|}\tilde C_{\frac{p+2}{2p}N, N}\|\nabla u\|_{L^{2}(\Omega)},
\end{align*}
where $\tilde C_{\frac{p+2}{2p}N, N}$ is defined in \eqref{hlsesti2} with $\lambda=(p+2)N/(2p)$. 
\end{proof}

\begin{theorem}\label{conesti}
Let $\Omega\subset\mathbb{R}^N~(N\in\mathbb{N})$ be a bounded convex domain. Suppose that $1\leq q\leq p<qN/(N-q)$ if $N>q$, and $1\leq q\leq p<\infty$ if $N=q$. 
Then, we have
\begin{align}\label{result0}
\|u-u_{\Omega}\|_{L^p(\Omega)}\leq D_p(\Omega)\|\nabla u\|_{L^q(\Omega)}~\mbox{for all}~u\in W^{1,q}(\Omega) 
\end{align}
with
\begin{align*}
D_p(\Omega)=\frac{{d_\Omega}^N}{N|\Omega|}(A_{r}A_{q}A_{p'})^N \||x|^{1-N}\|_{L^{r}(V)},
\end{align*}
where $\Omega_{x}:=\{x-y\mid y\in\Omega\}$ for $x\in\Omega$, $V:=\cup_{x\in\Omega}\Omega_x$, and $r=qp/((q-1)p+q)$.
\end{theorem}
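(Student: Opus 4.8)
\textbf{Proof proposal for Theorem \ref{conesti}.}

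The plan is to run the same pointwise-to-integral machinery used in Theorems \ref{conesti0} and \ref{conesti00}, but to estimate the resulting convolution with Young's inequality (Lemma \ref{th1}) instead of the Hardy--Littlewood--Sobolev inequality, since the exponent regime here is the subcritical one $p < qN/(N-q)$ where Young applies cleanly after localizing the kernel. First I would fix $u \in W^{1,q}(\Omega)$ and apply Lemma \ref{eq1} to obtain, for a.e.\ $x \in \Omega$,
\begin{align*}
|u(x) - u_\Omega| \leq \frac{d_\Omega^N}{N|\Omega|} \int_\Omega |x-z|^{1-N} |\nabla u(z)|\, dz.
\end{align*}
The key observation is that the inner integral is a \emph{truncated} convolution: writing $g := E_{\Omega,\mathbb{R}^N}|\nabla u| \in L^q(\mathbb{R}^N)$ and letting $k(y) := |y|^{1-N} \mathbf{1}_{V}(y)$ where $V = \bigcup_{x\in\Omega}\Omega_x$, one has $|x - z|^{1-N} = k(x-z)$ for all $x, z \in \Omega$, because $x - z \in \Omega_x \subseteq V$. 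Hence $|u(x) - u_\Omega| \leq \frac{d_\Omega^N}{N|\Omega|}\,(k * g)(x)$ for $x \in \Omega$, and extending the integration in $x$ from $\Omega$ to $\mathbb{R}^N$ only increases the $L^p$ norm, so $\|u - u_\Omega\|_{L^p(\Omega)} \leq \frac{d_\Omega^N}{N|\Omega|}\, \|k * g\|_{L^p(\mathbb{R}^N)}$.

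Next I would apply Young's inequality with the triple $(r, q, p)$ satisfying $1/p = 1/r + 1/q - 1$; solving gives exactly $r = qp/((q-1)p + q)$ as in the statement. One must check that this $r$ is admissible, i.e.\ $1 \leq r \leq \infty$ and $1/r + 1/q - 1 \geq 0$: the latter is equivalent to $1/p \geq 0$, which is automatic, and $r \geq 1$ follows from $p \geq q$, while the condition $p < qN/(N-q)$ (or $p < \infty$ when $N = q$) is precisely what guarantees $k \in L^r(V)$, since near the origin $|y|^{1-N} \in L^r_{\mathrm{loc}}$ iff $r(N-1) < N$, i.e.\ $r < N/(N-1)$, and a short computation shows $r < N/(N-1) \iff p < qN/(N-q)$; away from the origin $V$ is bounded (contained in $B(0, 2d_\Omega)$) so there is no integrability issue. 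Lemma \ref{th1} then yields
\begin{align*}
\|k * g\|_{L^p(\mathbb{R}^N)} \leq (A_r A_q A_{p'})^N \|k\|_{L^r(\mathbb{R}^N)} \|g\|_{L^q(\mathbb{R}^N)} = (A_r A_q A_{p'})^N \big\||x|^{1-N}\big\|_{L^r(V)} \|\nabla u\|_{L^q(\Omega)},
\end{align*}
using $\|g\|_{L^q(\mathbb{R}^N)} = \|\nabla u\|_{L^q(\Omega)}$ and $\|k\|_{L^r(\mathbb{R}^N)} = \||x|^{1-N}\|_{L^r(V)}$. Combining with the previous display gives \eqref{result0} with the stated $D_p(\Omega)$.

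I expect the only genuine obstacle to be the bookkeeping around the admissibility of $r$ and the equivalence between the finiteness of $\||x|^{1-N}\|_{L^r(V)}$ and the hypothesis $p < qN/(N-q)$ (respectively $p < \infty$ when $N = q$, in which case $r < N/(N-1)$ is forced for every finite $p$); the limiting and boundary cases $q = 1$, $p = q$, and $N = q$ should be checked against the convention $A_1 = A_\infty = 1$ and against Young's boundary cases, but these are routine. Everything else --- the pointwise bound, the truncation identity turning the integral into a convolution against a compactly supported kernel, and the extension of the domain of integration --- is a direct transcription of the arguments already given for Theorems \ref{conesti0} and \ref{conesti00}.
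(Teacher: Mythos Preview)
Your proposal is correct and follows essentially the same route as the paper: both arguments start from the pointwise bound of Lemma~\ref{eq1}, recognize the right-hand side as a convolution against the kernel $|y|^{1-N}$ truncated to $V$, and then apply Young's inequality (Lemma~\ref{th1}) with the exponent triple $(r,q,p)$ to obtain the stated constant; the paper also carries out the same finiteness check for $\||x|^{1-N}\|_{L^r(V)}$ via the inequality $r(1-N)+N-1>-1$, which is exactly your condition $r<N/(N-1)$.
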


\begin{proof} 
First, we prove $I:=\||x|^{1-N}\|_{L^{r}(V)}^r<\infty$. Let  $\rho=2d_{\Omega}$ so that $V\subset B(0,\rho)$. We have   
\begin{align*}
\frac{pq(1-N)}{(q-1)p+q}+N-1&=\frac{pq(1-N)+Np(q-1)+Nq}{(q-1)p+q}-1\\
&=\frac{Nq-(N-q)p}{(q-1)p+q}-1>-1.
\end{align*}

Therefore, 
\begin{align*} 
I&=\int_{V}|x|^{\frac{pq(1-N)}{(q-1)p+q}}dx
\leq\int_{B(0,\rho)}|x|^{\frac{pq(1-N)}{(q-1)p+q}}dx
=J\int_{0}^{\rho}\rho^{\frac{pq(1-N)}{(q-1)p+q}+N-1}d\rho~<\infty,
\end{align*}
where $J$ is defined by
\[
J=
\begin{cases}
2&\text{($N=1$)},\\
2\pi&\text{($N=2$)},\\
2\pi\displaystyle\int_{[0,\pi]^{N-2}}\displaystyle\prod_{i=1}^{N-2}(\sin \theta_i)^{N-i-1} d\theta_1\cdots d\theta_{N-2}&\text{($N\geq 3$)}.
\end{cases}
\] 

Next, we show \eqref{result0}. For $x\in\Omega$,  it follows from Lemma \ref{eq1} that  
\begin{align*}
|u(x)-u_{\Omega}|&\leq\frac{d_{\Omega}^N}{N|\Omega|}\int_{\Omega}|x-y|^{1-N}|\nabla u(y)|dy\\ 
&=\frac{d_{\Omega}^N}{N|\Omega|}\int_{\Omega_x}|y|^{1-N}|\nabla u(x-y)|dy\\ 
&\leq\frac{d_{\Omega}^N}{N|\Omega|}\int_{V}|y|^{1-N}\left(E_{\Omega,V}|\nabla u|\right)(x-y)dy.
\end{align*}

Since $E_{V,\mathbb{R}^N}E_{\Omega,V}=E_{\Omega,\mathbb{R}^N}$, 
\begin{align}\label{kiso1}
|u(x)-u_{\Omega}|&\leq\frac{d_{\Omega}^N}{N|\Omega|}\int_{\mathbb{R}^N}\left(E_{V,\mathbb{R}^N}\psi\right)(y)\left(E_{\Omega,\mathbb{R}^N}|\nabla u|\right)(x-y)dy,
\end{align}
where $\psi(y)=|y|^{1-N}$ for $y\in V$. We denote $f(x)=\left(E_{V,\mathbb{R}^N}\psi\right)(x)$ and $g(x)=\left(E_{\Omega,\mathbb{R}^N}|\nabla u|\right)(x)$. Lemma \ref{th1} and \eqref{kiso1} give

\begin{align*}
\|u-u_{\Omega}\|_{L^p(\Omega)}&\leq\frac{d_{\Omega}^N}{N|\Omega|}\|f*g\|_{L^p(\Omega)}\\
&\leq\frac{d_{\Omega}^N}{N|\Omega|}\|f*g\|_{L^p(\mathbb{R}^N)}\\ 
&\leq \frac{d_{\Omega}^N}{N|\Omega|}(A_{r}A_{q}A_{p'})^N\|f\|_{L^{r}(\mathbb{R}^N)}\|g\|_{L^{q}(\mathbb{R}^N)}\\
&=\frac{d_{\Omega}^N}{N|\Omega|}(A_{r}A_{q}A_{p'})^N I^{\frac{1}{r}}\|\nabla u\|_{L^{q}(\Omega)}.
\end{align*}

\end{proof}

\begin{theorem}\label{conesti1}
Let $\Omega\subset\mathbb{R}^N~(N\in\mathbb{N})$ be a bounded convex domain, and let $q>N$. Then, we have
\begin{align}\label{result1}
\|u-u_{\Omega}\|_{L^\infty(\Omega)}\leq D_\infty(\Omega)\|\nabla u\|_{L^q(\Omega)}~\mbox{for all}~u\in W^{1,q}(\Omega)
\end{align}
with
\begin{align*}
D_\infty(\Omega)=\frac{{d_\Omega}^N}{N|\Omega|}\||x|^{1-N}\|_{L^{q'}(V)},
\end{align*}
where $V$ is defined in Theorem $\ref{conesti}$. 
\end{theorem}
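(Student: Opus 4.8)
The plan is to follow exactly the scheme of the proof of Theorem~\ref{conesti}, but with Young's inequality replaced by a single application of H\"older's inequality: since the target norm is now $L^\infty$ rather than $L^p$ with $p<\infty$, there is no need to keep the convolution structure, and the estimate can be carried out pointwise in $x$.

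First I would record that $V$ is bounded: for $x,y\in\Omega$ we have $|x-y|\le d_\Omega$, so $\Omega_x\subset B(0,d_\Omega)$ for every $x\in\Omega$ and hence $V\subset B(0,d_\Omega)$. Next I would verify that the claimed constant is finite, i.e.\ $\||x|^{1-N}\|_{L^{q'}(V)}<\infty$. Passing to polar coordinates as in the proof of Theorem~\ref{conesti}, $\int_V|x|^{(1-N)q'}\,dx$ is dominated by a constant times $\int_0^{d_\Omega} r^{(1-N)q'+N-1}\,dr$, which converges precisely when $(1-N)q'+N-1>-1$, equivalently $q'<N/(N-1)$, equivalently $q>N$; this is exactly the standing hypothesis. (When $N=1$ the integrand is identically $1$, so finiteness is immediate from the boundedness of $V$.)

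Then, for a fixed $x\in\Omega$, I would start from Lemma~\ref{eq1},
\[
|u(x)-u_{\Omega}|\le\frac{d_{\Omega}^N}{N|\Omega|}\int_{\Omega}|x-y|^{1-N}|\nabla u(y)|\,dy,
\]
and apply H\"older's inequality with exponents $q'$ and $q$ to the right-hand integral, obtaining
\[
|u(x)-u_{\Omega}|\le\frac{d_{\Omega}^N}{N|\Omega|}\left(\int_{\Omega}|x-y|^{(1-N)q'}\,dy\right)^{\frac{1}{q'}}\|\nabla u\|_{L^q(\Omega)}.
\]
The substitution $z=x-y$ turns the first factor into $\left(\int_{\Omega_x}|z|^{(1-N)q'}\,dz\right)^{1/q'}$, which is bounded above by $\left(\int_{V}|z|^{(1-N)q'}\,dz\right)^{1/q'}=\||x|^{1-N}\|_{L^{q'}(V)}$ since $\Omega_x\subset V$. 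As $q>N$, every $u\in W^{1,q}(\Omega)$ admits a continuous representative, so taking the essential supremum over $x\in\Omega$ yields \eqref{result1} with $D_\infty(\Omega)=\frac{d_\Omega^N}{N|\Omega|}\||x|^{1-N}\|_{L^{q'}(V)}$.

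I expect no serious obstacle here: the argument is routine once Lemma~\ref{eq1} is in hand. The only point that genuinely requires care is the finiteness of $\||x|^{1-N}\|_{L^{q'}(V)}$, and the hypothesis $q>N$ (equivalently $q'<N/(N-1)$) is precisely the borderline integrability condition that guarantees it.
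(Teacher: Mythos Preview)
Your proof is correct and yields exactly the same constant as the paper. The paper's own argument keeps the convolution framework of Theorem~\ref{conesti}: it invokes the bound \eqref{kiso1} (with the zero extensions $f=E_{V,\mathbb{R}^N}\psi$ and $g=E_{\Omega,\mathbb{R}^N}|\nabla u|$) and then applies Young's inequality (Lemma~\ref{th1}) with $t=\infty$, $r=q'$, $s=q$; since $A_{q'}A_qA_1=1$ in this endpoint case, Young collapses to H\"older and no extra factor appears. Your route bypasses the extension/convolution packaging and applies H\"older directly to the integral in Lemma~\ref{eq1}, then uses $\Omega_x\subset V$ to get a bound independent of $x$. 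The two arguments are mathematically identical; yours is a little more self-contained (it does not reference \eqref{kiso1} from the previous proof), while the paper's version emphasizes the structural parallel with Theorem~\ref{conesti}. One cosmetic difference: the paper encloses $V$ in $B(0,2d_\Omega)$ rather than $B(0,d_\Omega)$ when checking finiteness, but either radius works and neither enters the final constant.
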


\begin{proof}
First, we show $I:=\||x|^{1-N}\|_{L^{q'}(V)}^{q'}<\infty$. Let $\rho=2d_{\Omega}$ so that $V\subset B(0,\rho)$. We have
\begin{align*}
q'(1-N)+N-1=\frac{q(1-N)+N(q-1)}{q-1}-1=\frac{q-N}{q-1}-1>-1.
\end{align*} 
Therefore, 
\begin{align*} 
I=\int_{V}|x|^{q'(1-N)}dx\leq\int_{B(0,\rho)}|x|^{q'(1-N)}dx
=J\int_{0}^{\rho}\rho^{q'(1-N)+N-1}d\rho~<\infty,
\end{align*}
where $J$ is defined in the proof of Theorem $\ref{conesti}$.

Next, we prove \eqref{result1}. Let $r=\frac{q}{q-1}(\geq\hspace{-1.0mm}1)$, $f(x)=\left(E_{V,\mathbb{R}^N}\psi\right)(x)$, and $g(x)=\left(E_{\Omega,\mathbb{R}^N}|\nabla u|\right)(x)$, where $\psi$ is denoted in the proof of Theorem $\ref{conesti}$. From Lemma \ref{th1} and \eqref{kiso1}, it follows that
\begin{align*}
\|u-u_{\Omega}\|_{L^\infty(\Omega)}&\leq\frac{d_{\Omega}^N}{N|\Omega|}\|f*g\|_{L^\infty(\Omega)}
\leq\frac{d_{\Omega}^N}{N|\Omega|}\|f*g\|_{L^\infty(\mathbb{R}^N)}\\
&\leq\frac{d_{\Omega}^N}{N|\Omega|}\|f\|_{L^{q'}(\mathbb{R}^N)}\|g\|_{L^{q}(\mathbb{R}^N)}
=\frac{d_{\Omega}^N}{N|\Omega|}I^{\frac{1}{q'}}\|\nabla u\|_{L^{q}(\Omega)}.
\end{align*}

\end{proof}

\section{Estimation of $C_p(\Omega)$ for certain domains}\label{sec:num}
In this section, we present numerical examples where explicit values of $C_p(\Omega)$ on a square and a triangle domain are computed using Theorem \ref{mainembedding}, \ref{conesti0}, \ref{conesti00}, \ref{conesti}, and \ref{conesti1}.
All computations were performed on a computer with Intel Xeon E7-4830 v2 at 2.20 GHz$\times$4, 2 TB RAM, CentOS 6.6, and MATLAB 2016a.
All rounding errors were strictly estimated using toolbox the INTLAB version 9 \cite{Intlab} for verified numerical computations. 
Therefore, the accuracy of all results was mathematically guaranteed. 

First, we select domains $\Omega_i~(1\leq i\leq n)$ satisfying \eqref{omega1} and \eqref{omega2}. For all domains $\Omega_i~(1\leq i\leq n)$, we then compute the values of $D_p(\Omega_i)$ using Theorem \ref{conesti0}, \ref{conesti00}, \ref{conesti}, and \ref{conesti1}. Next, explicit values of $C_p(\Omega)$ are computed through Theorem \ref{mainembedding}. 

\subsection{Estimation on a square domain}
For the first example, we select the case in which $\Omega=(0,1)^2$. 
In this case, $V$ (in Theorem \ref{conesti} and \ref{conesti1}) becomes a square with side length $2/\sqrt{n}$ (see Fig.~\ref{fig:V1}).
Note that $\||x|^{1-N}\|_{L^r(V)}=\int_{V}|x|^{\beta}dx$, where $\beta=qp(1-N)/((q-1)p+q)$ if $p<\infty$ and $\beta=q'(1-N)$ if $p=\infty$.

For $n=1,4,16, 64, \cdots$, we define each $\Omega_i~(1\leq i\leq n)$ as a square with side length $1/\sqrt{n}$; see Fig.~\ref{fig:omegarec} for the cases in which $n=4$ and $n=16$.
For this division of $\Omega$, Theorem \ref{mainembedding} states that 
\begin{align*}
C_p(\Omega)=2^{1-\frac{1}{q}}\max\left(n^{-\left(\frac{1}{p}-\frac{1}{q}\right)},~\max_{1\leq i\leq n}D_p(\Omega_i)\right).
\end{align*}
Table \ref{tb:square} compares upper bounds for $C_p(\Omega)$ computed by Theorem \ref{conesti0}, \ref{conesti00}, \ref{conesti}, \cite[Lemma $2.3$]{cai}, and \cite[Corollary $4.3$]{tanaka};
the numbers of division $n$ are shown in the corresponding parentheses.
Moreover, these values
are plotted in Fig.~\ref{plotrec}, except for the values derived from \cite[Corollary $4.3$]{tanaka}.

Theorem \ref{conesti0}, \ref{conesti00}, \ref{conesti}, and \cite[Lemma $2.3$]{cai} provide sharper estimates of $C_p(\Omega)$ than \cite[Corollary $4.3$]{tanaka} for all $p$'s.
The estimates derived by Theorem \ref{conesti00} and Theorem \ref{conesti} for $31\leq p\leq 80$ are sharper than the estimates obtained by \cite[Lemma $2.3$]{cai}.

We also show the values of $C_\infty(\Omega)$ computed by Theorem \ref{conesti1} for $3\leq q\leq10$ in Table \ref{tb:square1}.
\begin{figure}[h]
\begin{minipage}{0.5\hsize}
\centering
\includegraphics[width=7.0cm]{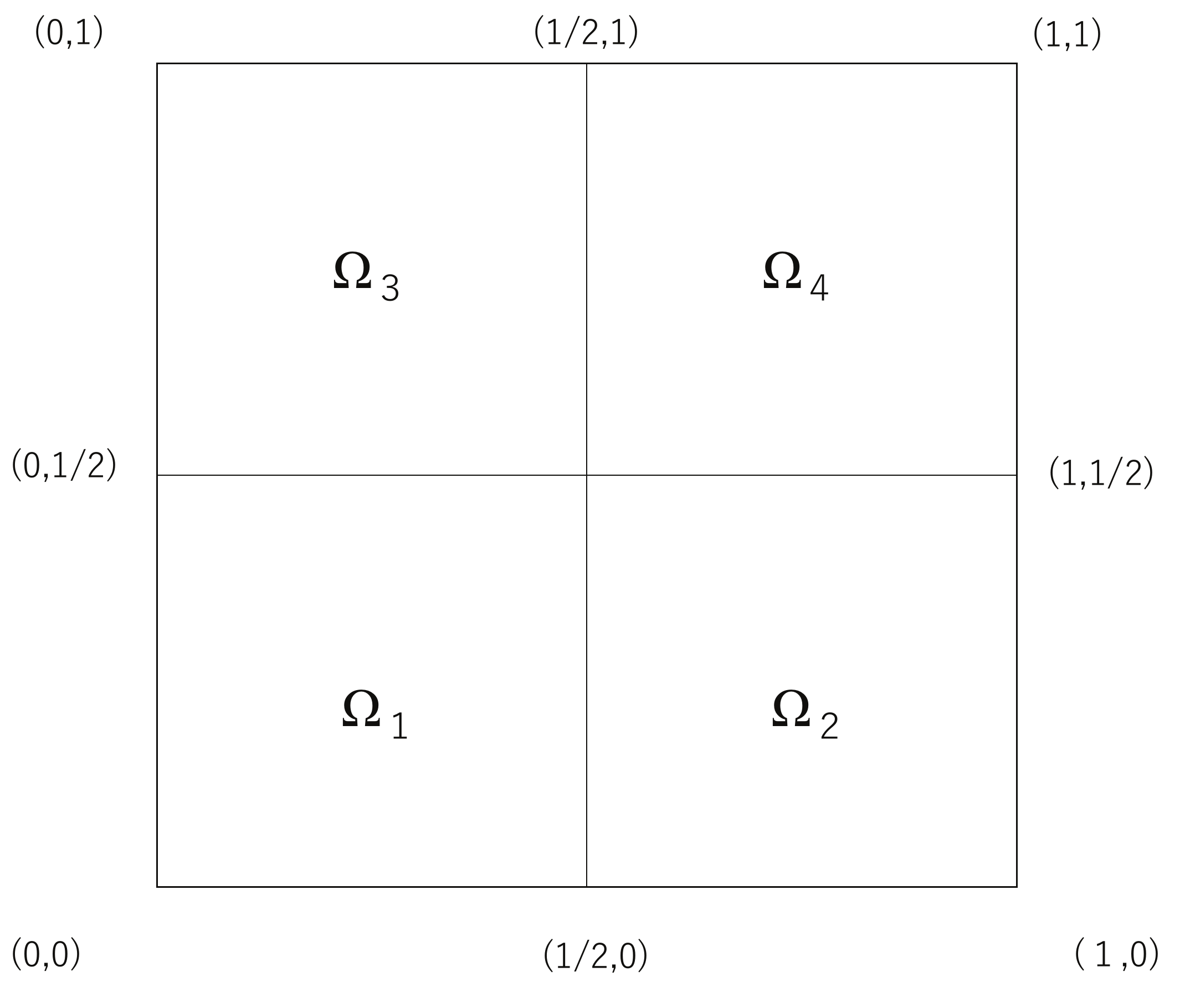}
\end{minipage}
\begin{minipage}{0.5\hsize}
\centering
\includegraphics[width=7.0cm]{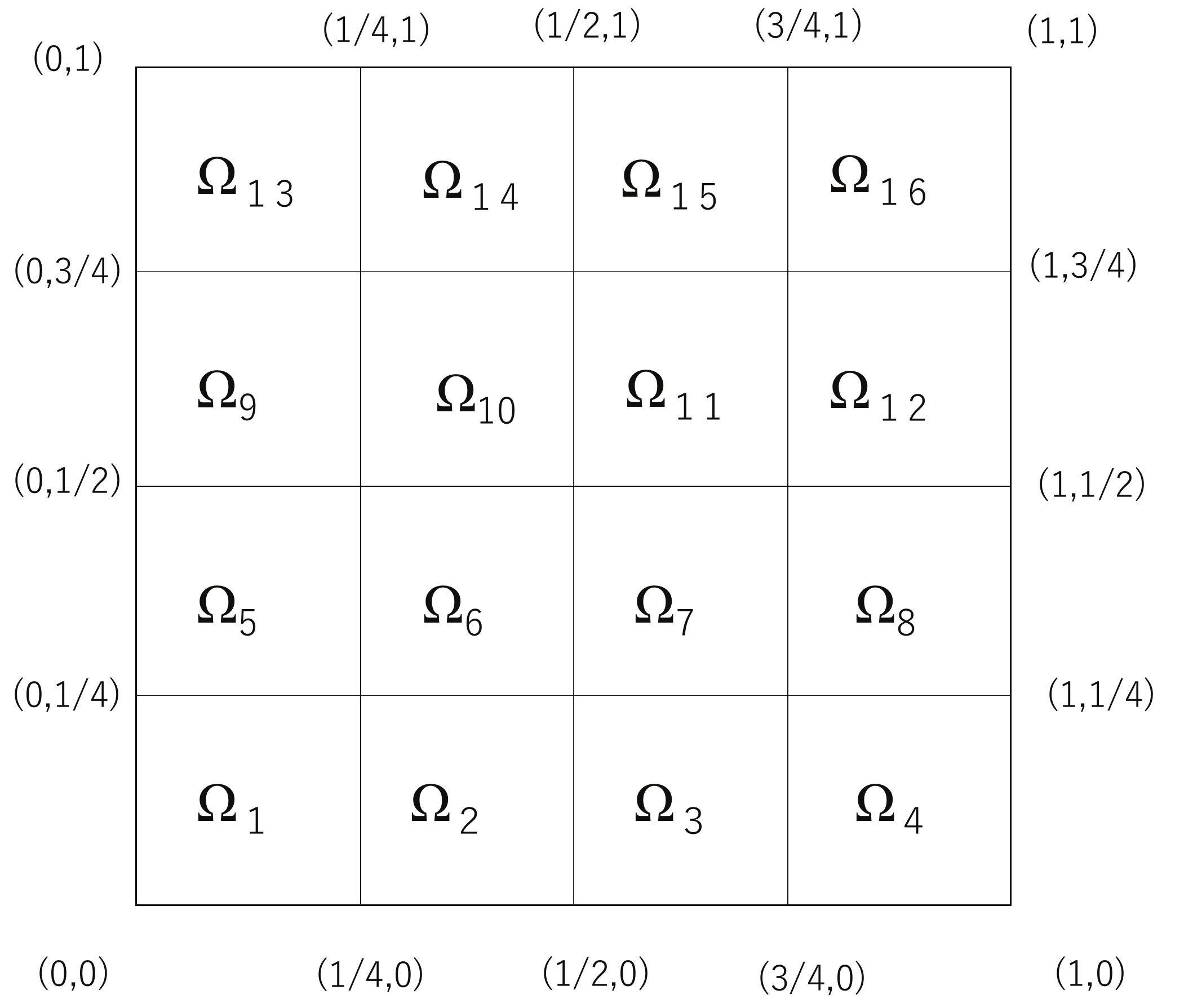}
\end{minipage}
\vspace{0.3cm}\caption{$\Omega_i$ for the cases in which $n=4$ (the left side) and  $n=16$ (the right side).}
\label{fig:omegarec}
\end{figure}

\begin{figure}
\centering
\includegraphics[width=8.5cm]{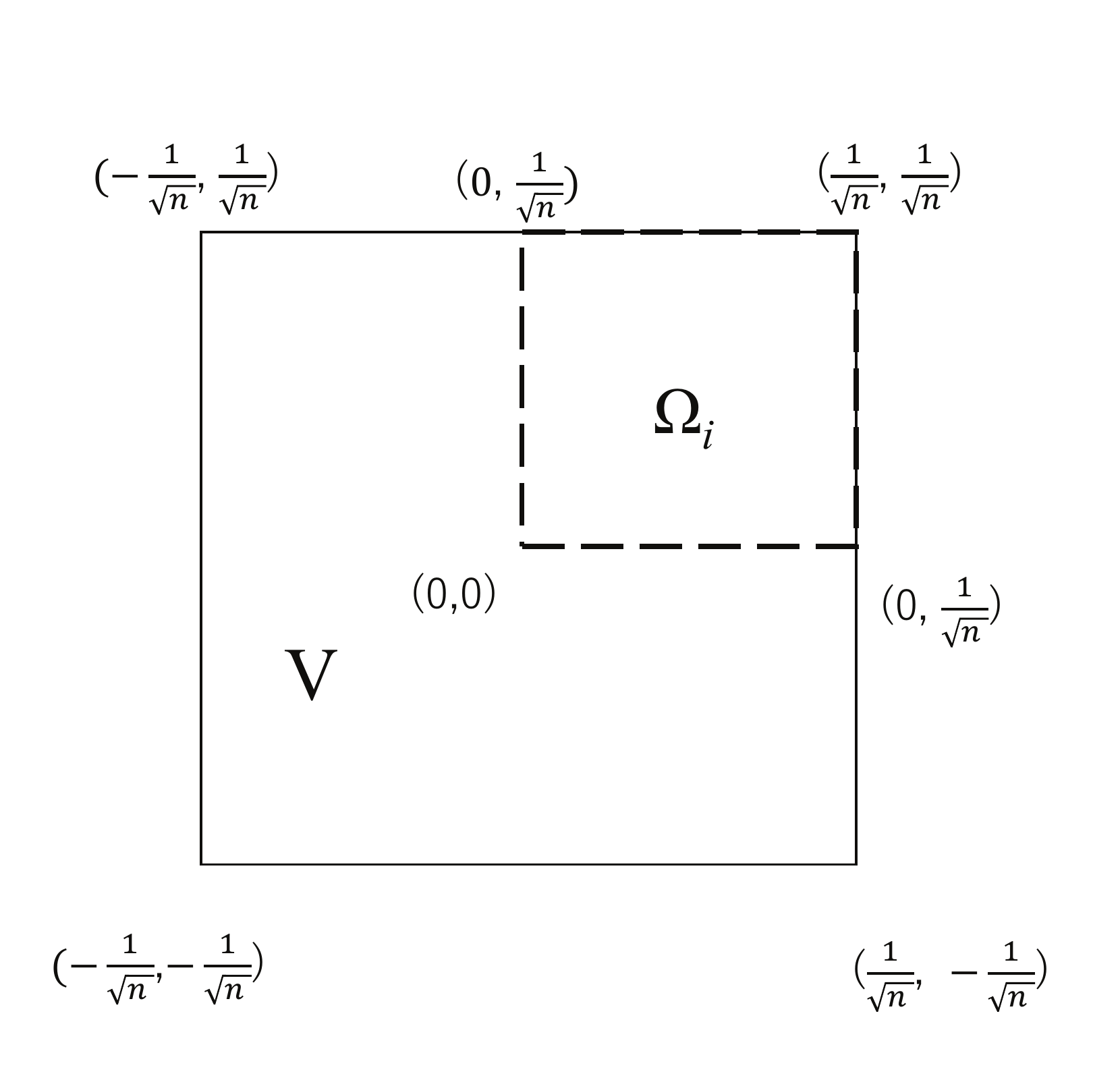}
\caption{The domain $V$ in Theorem \ref{conesti} and \ref{conesti1}.}
\label{fig:V1}
\end{figure}

\begin{table}[h]
\caption{Computed values of $C_p(\Omega)$ for $\Omega=(0,1)^2$ and $q=2$.
The numbers of division $n$ are shown in the corresponding parentheses.
Theorem \ref{conesti0} cannot be used for $p>4$ when $N=2$.}
\begin{center}
{\renewcommand\arraystretch{1.1}
{\tabcolsep=1.2mm
\begin{tabular}{c||ccccc}
\hline
$p$ & Theorem \ref{conesti0} & Theorem \ref{conesti00} & Theorem \ref{conesti} & \cite[Lemma $2.3$]{cai} & \cite[Corollary $4.3$]{tanaka}\\
\hline
3&2.828428(64)&21.041858(1)&2.6470760(16)&1.272533&1.291703$\times 10^4$\\ 
4&2.828428(16)&12.804451(1)&3.0989536(16)&1.553774&1.809271$\times 10^4$\\ 
5&-&10.313751(1)&3.527578(16)&1.841950&2.275458$\times 10^4$\\ 
6&-&9.210466(1)&3.922709(16)&2.135792&2.701890$\times 10^4$\\ 
7&-&8.643432(1)&4.288114(16)&2.434362&3.096661$\times 10^4$\\ 
8&-&8.335480(1)&4.628497(16)&2.736941&3.465528$\times 10^4$\\ 
9&-&8.170423(1)&4.947849(16)&3.042967&3.812726$\times 10^4$\\ 
10&-&8.091385(1)&5.249352(16)&3.351991&4.141471$\times 10^4$\\ 
\hline
20&-&8.698248(1)&7.659208(16)&6.549949&6.789009$\times 10^4$\\ 
30&-&9.741473(1)&9.485455(16)&9.856546&8.800592$\times 10^4$\\ 
40&-&10.75962(1)&10.640059(64)&13.218367&1.048141$\times 10^5$\\ 
50&-&11.71416(1)&12.020066(64)&16.613831&1.195208$\times 10^5$\\ 
60&-&12.60732(1)&13.258962(64)&20.031993&1.327453$\times 10^5$\\ 
70&-&13.44678(1)&14.392550(64)&23.466517&1.448540$\times 10^5$\\ 
80&-&14.23999(1)&15.443710(64)&26.913400&1.560849$\times 10^5$\\ 
\hline
\end{tabular}
\label{tb:square}
}}
\end{center}
\end{table}
\if 0
============================================================================================================================

           |n            |M            |N            |epsilon      |sigma        |
Constant : |2.000000e+00 |1.000000e+00 |2.000000e+00 |2.500000e-01 |1.000000e+00 |
============================================================================================================================

|p(1)       |tau(2)      |CC(3)       |Tp(4)       |Aq(5)       |Cp(6)       |
3.000000e+00,1.291000e+01,1.122462e+00,2.799110e-01,4.111221e+04,1.291703e+04; 
4.000000e+00,8.250000e+00,1.189207e+00,3.183099e-01,4.779650e+04,1.809271e+04; 
5.000000e+00,6.700000e+00,1.231144e+00,3.578039e-01,5.165530e+04,2.275458e+04; 
6.000000e+00,5.920000e+00,1.259921e+00,3.958540e-01,5.417381e+04,2.701890e+04; 
7.000000e+00,5.460000e+00,1.280887e+00,4.321119e-01,5.594829e+04,3.096661e+04; 
8.000000e+00,5.140000e+00,1.296840e+00,4.666405e-01,5.726652e+04,3.465528e+04; 
9.000000e+00,4.920000e+00,1.309385e+00,4.995908e-01,5.828463e+04,3.812726e+04; 
1.000000e+01,4.750000e+00,1.319508e+00,5.311210e-01,5.909479e+04,4.141471e+04; 
1.100000e+01,4.620000e+00,1.327849e+00,5.613766e-01,5.975487e+04,4.454267e+04; 
1.200000e+01,4.520000e+00,1.334840e+00,5.904850e-01,6.030309e+04,4.753107e+04; 
1.300000e+01,4.430000e+00,1.340784e+00,6.185569e-01,6.076569e+04,5.039610e+04; 
1.400000e+01,4.360000e+00,1.345900e+00,6.456879e-01,6.116130e+04,5.315109e+04; 
1.500000e+01,4.300000e+00,1.350350e+00,6.719606e-01,6.150350e+04,5.580716e+04; 
1.600000e+01,4.250000e+00,1.354256e+00,6.974472e-01,6.180243e+04,5.837374e+04; 
1.700000e+01,4.200000e+00,1.357711e+00,7.222105e-01,6.206582e+04,6.085884e+04; 
1.800000e+01,4.160000e+00,1.360790e+00,7.463058e-01,6.229966e+04,6.326939e+04; 
1.900000e+01,4.130000e+00,1.363551e+00,7.697820e-01,6.250865e+04,6.561138e+04; 
2.000000e+01,4.100000e+00,1.366040e+00,7.926823e-01,6.269656e+04,6.789009e+04; 
2.100000e+01,4.070000e+00,1.368297e+00,8.150456e-01,6.286643e+04,7.011016e+04; 
2.200000e+01,4.050000e+00,1.370351e+00,8.369062e-01,6.302076e+04,7.227569e+04; 
2.300000e+01,4.020000e+00,1.372229e+00,8.582955e-01,6.316155e+04,7.439031e+04; 
2.400000e+01,4.000000e+00,1.373954e+00,8.792414e-01,6.329053e+04,7.645730e+04; 
2.500000e+01,3.980000e+00,1.375542e+00,8.997696e-01,6.340914e+04,7.847963e+04; 
2.600000e+01,3.970000e+00,1.377009e+00,9.199031e-01,6.351855e+04,8.045992e+04; 
2.700000e+01,3.950000e+00,1.378370e+00,9.396632e-01,6.361982e+04,8.240060e+04; 
2.800000e+01,3.940000e+00,1.379634e+00,9.590691e-01,6.371382e+04,8.430386e+04; 
2.900000e+01,3.920000e+00,1.380812e+00,9.781388e-01,6.380129e+04,8.617169e+04; 
3.000000e+01,3.910000e+00,1.381913e+00,9.968885e-01,6.388290e+04,8.800592e+04; 
3.100000e+01,3.900000e+00,1.382943e+00,1.015333e+00,6.395923e+04,8.980824e+04; 
3.200000e+01,3.890000e+00,1.383910e+00,1.033487e+00,6.403076e+04,9.158020e+04; 
3.300000e+01,3.880000e+00,1.384819e+00,1.051363e+00,6.409793e+04,9.332322e+04; 
3.400000e+01,3.870000e+00,1.385674e+00,1.068973e+00,6.416114e+04,9.503862e+04; 
3.500000e+01,3.860000e+00,1.386482e+00,1.086329e+00,6.422072e+04,9.672763e+04; 
3.600000e+01,3.850000e+00,1.387245e+00,1.103440e+00,6.427698e+04,9.839139e+04; 
3.700000e+01,3.840000e+00,1.387967e+00,1.120316e+00,6.433019e+04,1.000310e+05; 
3.800000e+01,3.840000e+00,1.388651e+00,1.136967e+00,6.438059e+04,1.016474e+05; 
3.900000e+01,3.830000e+00,1.389301e+00,1.153402e+00,6.442837e+04,1.032414e+05; 
4.000000e+01,3.820000e+00,1.389918e+00,1.169627e+00,6.447377e+04,1.048141e+05; 
4.100000e+01,3.820000e+00,1.390506e+00,1.185652e+00,6.451697e+04,1.063663e+05; 
4.200000e+01,3.810000e+00,1.391066e+00,1.201482e+00,6.455807e+04,1.078985e+05; 
4.300000e+01,3.800000e+00,1.391600e+00,1.217125e+00,6.459727e+04,1.094117e+05; 
4.400000e+01,3.800000e+00,1.392110e+00,1.232587e+00,6.463467e+04,1.109064e+05; 
4.500000e+01,3.790000e+00,1.392597e+00,1.247873e+00,6.467042e+04,1.123833e+05; 
4.600000e+01,3.790000e+00,1.393063e+00,1.262991e+00,6.470460e+04,1.138430e+05; 
4.700000e+01,3.780000e+00,1.393510e+00,1.277944e+00,6.473733e+04,1.152861e+05; 
4.800000e+01,3.780000e+00,1.393938e+00,1.292739e+00,6.476868e+04,1.167131e+05; 
4.900000e+01,3.770000e+00,1.394349e+00,1.307380e+00,6.479876e+04,1.181245e+05; 
5.000000e+01,3.770000e+00,1.394744e+00,1.321871e+00,6.482761e+04,1.195208e+05; 
5.100000e+01,3.770000e+00,1.395123e+00,1.336217e+00,6.485536e+04,1.209025e+05; 
5.200000e+01,3.760000e+00,1.395488e+00,1.350422e+00,6.488201e+04,1.222700e+05; 
5.300000e+01,3.760000e+00,1.395839e+00,1.364491e+00,6.490767e+04,1.236237e+05; 
5.400000e+01,3.750000e+00,1.396177e+00,1.378426e+00,6.493238e+04,1.249641e+05; 
5.500000e+01,3.750000e+00,1.396503e+00,1.392232e+00,6.495617e+04,1.262914e+05; 
5.600000e+01,3.750000e+00,1.396817e+00,1.405912e+00,6.497912e+04,1.276061e+05; 
5.700000e+01,3.740000e+00,1.397120e+00,1.419470e+00,6.500127e+04,1.289086e+05; 
5.800000e+01,3.740000e+00,1.397413e+00,1.432908e+00,6.502263e+04,1.301990e+05; 
5.900000e+01,3.740000e+00,1.397696e+00,1.446229e+00,6.504328e+04,1.314778e+05; 
6.000000e+01,3.740000e+00,1.397970e+00,1.459437e+00,6.506326e+04,1.327453e+05; 
6.100000e+01,3.730000e+00,1.398235e+00,1.472534e+00,6.508255e+04,1.340016e+05; 
6.200000e+01,3.730000e+00,1.398491e+00,1.485524e+00,6.510123e+04,1.352472e+05; 
6.300000e+01,3.730000e+00,1.398739e+00,1.498407e+00,6.511932e+04,1.364824e+05; 
6.400000e+01,3.730000e+00,1.398980e+00,1.511188e+00,6.513685e+04,1.377072e+05; 
6.500000e+01,3.720000e+00,1.399213e+00,1.523869e+00,6.515382e+04,1.389220e+05; 
6.600000e+01,3.720000e+00,1.399439e+00,1.536451e+00,6.517028e+04,1.401271e+05; 
6.700000e+01,3.720000e+00,1.399658e+00,1.548937e+00,6.518625e+04,1.413226e+05; 
6.800000e+01,3.720000e+00,1.399871e+00,1.561329e+00,6.520176e+04,1.425088e+05; 
6.900000e+01,3.710000e+00,1.400078e+00,1.573629e+00,6.521681e+04,1.436859e+05; 
7.000000e+01,3.710000e+00,1.400279e+00,1.585839e+00,6.523142e+04,1.448540e+05; 
7.100000e+01,3.710000e+00,1.400474e+00,1.597962e+00,6.524563e+04,1.460134e+05; 
7.200000e+01,3.710000e+00,1.400664e+00,1.609998e+00,6.525944e+04,1.471644e+05; 
7.300000e+01,3.710000e+00,1.400849e+00,1.621949e+00,6.527289e+04,1.483069e+05; 
7.400000e+01,3.700000e+00,1.401029e+00,1.633819e+00,6.528595e+04,1.494413e+05; 
7.500000e+01,3.700000e+00,1.401204e+00,1.645607e+00,6.529866e+04,1.505676e+05; 
7.600000e+01,3.700000e+00,1.401374e+00,1.657316e+00,6.531105e+04,1.516861e+05; 
7.700000e+01,3.700000e+00,1.401540e+00,1.668947e+00,6.532311e+04,1.527970e+05; 
7.800000e+01,3.700000e+00,1.401702e+00,1.680501e+00,6.533487e+04,1.539003e+05; 
7.900000e+01,3.700000e+00,1.401860e+00,1.691981e+00,6.534634e+04,1.549963e+05; 
8.000000e+01,3.690000e+00,1.402013e+00,1.703388e+00,6.535750e+04,1.560849e+05; 
8.100000e+01,3.690000e+00,1.402163e+00,1.714722e+00,6.536838e+04,1.571665e+05; 
8.200000e+01,3.690000e+00,1.402310e+00,1.725986e+00,6.537900e+04,1.582411e+05; 
8.300000e+01,3.690000e+00,1.402452e+00,1.737180e+00,6.538937e+04,1.593089e+05; 
8.400000e+01,3.690000e+00,1.402592e+00,1.748306e+00,6.539950e+04,1.603700e+05; 
8.500000e+01,3.690000e+00,1.402728e+00,1.759365e+00,6.540939e+04,1.614245e+05; 
8.600000e+01,3.680000e+00,1.402861e+00,1.770358e+00,6.541905e+04,1.624725e+05; 
8.700000e+01,3.680000e+00,1.402991e+00,1.781287e+00,6.542847e+04,1.635142e+05; 
8.800000e+01,3.680000e+00,1.403118e+00,1.792152e+00,6.543768e+04,1.645496e+05; 
8.900000e+01,3.680000e+00,1.403242e+00,1.802954e+00,6.544669e+04,1.655789e+05; 
9.000000e+01,3.680000e+00,1.403364e+00,1.813695e+00,6.545550e+04,1.666022e+05; 
9.100000e+01,3.680000e+00,1.403482e+00,1.824376e+00,6.546411e+04,1.676195e+05; 
9.200000e+01,3.680000e+00,1.403599e+00,1.834997e+00,6.547254e+04,1.686311e+05; 
9.300000e+01,3.680000e+00,1.403712e+00,1.845560e+00,6.548080e+04,1.696369e+05; 
9.400000e+01,3.680000e+00,1.403824e+00,1.856066e+00,6.548888e+04,1.706371e+05; 
9.500000e+01,3.670000e+00,1.403933e+00,1.866515e+00,6.549677e+04,1.716317e+05; 
9.600000e+01,3.670000e+00,1.404039e+00,1.876908e+00,6.550450e+04,1.726209e+05; 
9.700000e+01,3.670000e+00,1.404144e+00,1.887246e+00,6.551207e+04,1.736047e+05; 
9.800000e+01,3.670000e+00,1.404246e+00,1.897531e+00,6.551949e+04,1.745832e+05; 
9.900000e+01,3.670000e+00,1.404347e+00,1.907762e+00,6.552676e+04,1.755566e+05; 
1.000000e+02,3.670000e+00,1.404445e+00,1.917941e+00,6.553388e+04,1.765248e+05; 
\fi

\if 0
3&2.8284271248(64)&21.0418573059(1)&2.6470759449(16)&1.2725322349\\ 
4&2.8284271248(16)&12.8044500498(1)&3.0989535269(16)&1.5537739741\\ 
5&-&10.3137501737(1)&3.5275776821(16)&1.8419494686\\ 
6&-&9.2104658433(1)&3.9227083510(16)&2.1357917042\\ 
7&-&8.6434309116(1)&4.2881130282(16)&2.4343612159\\ 
8&-&8.3354789931(1)&4.6284964745(16)&2.7369400735\\ 
9&-&8.1704222038(1)&4.9478483894(16)&3.0429661061\\ 
10&-&8.0913848184(1)&5.2493511638(16)&3.3519901544\\ 
11&-&8.0674183890(1)&5.5355265746(16)&3.6636472196\\ 
12&-&8.0803354793(1)&5.8083889255(16)&3.9776363705\\ 
13&-&8.1188043745(1)&6.0695666958(16)&4.2937063738\\ 
14&-&8.1754336296(1)&6.3203933324(16)&4.6116451804\\ 
15&-&8.2452211734(1)&6.5619741517(16)&4.9312720797\\ 
16&-&8.3246784564(1)&6.7952358189(16)&5.2524317420\\ 
17&-&8.4113109623(1)&7.0209633073(16)&5.5749896253\\ 
18&-&8.5032972073(1)&7.2398278395(16)&5.8988283854\\ 
19&-&8.5992833804(1)&7.4524082780(16)&6.2238450349\\ 
20&-&8.6982479913(1)&7.6592077037(16)&6.5499486702\\ 
21&-&8.7994103237(1)&7.8606664158(16)&6.8770586351\\ 
22&-&8.9021670997(1)&8.0571722380(16)&7.2051030226\\ 
23&-&9.0060477794(1)&8.2490687745(16)&7.5340174426\\ 
24&-&9.1106824478(1)&8.4366620854(16)&7.8637440017\\ 
25&-&9.2157783766(1)&8.6202261342(16)&8.1942304501\\ 
26&-&9.3211026728(1)&8.8000072671(16)&8.5254294662\\ 
27&-&9.4264692672(1)&8.9762279282(16)&8.8572980510\\ 
28&-&9.5317290438(1)&9.1490897591(16)&9.1897970134\\ 
29&-&9.6367622720(1)&9.3187762047(16)&9.5228905311\\ 
30&-&9.7414727484(1)&9.4854547168(16)&9.8565457720\\ 
31&-&9.8457832218(1)&9.6492786278(16)&10.1907325688\\ 
32&-&9.9496317918(1)&9.8103887528(16)&10.5254231367\\ 
33&-&10.0529690519(1)&9.9689147674(16)&10.8605918268\\ 
34&-&10.1557558087(1)&10.0111056450(64)&11.1962149122\\ 
35&-&10.2579612487(1)&10.0461542644(64)&11.5322703992\\ 
36&-&10.3595614569(1)&10.0793683992(64)&11.8687378617\\ 
37&-&10.4605382129(1)&10.1903931815(64)&12.2055982949\\ 
38&-&10.5608780080(1)&10.3424069177(64)&12.5428339860\\ 
39&-&10.6605712389(1)&10.4922661143(64)&12.8804283988\\ 
40&-&10.7596115432(1)&10.6400584264(64)&13.2183660718\\ 
41&-&10.8579952507(1)&10.7858657947(64)&13.5566325261\\ 
42&-&10.9557209274(1)&10.9297649488(64)&13.8952141836\\ 
43&-&11.0527889962(1)&11.0718278554(64)&14.2340982939\\ 
44&-&11.1492014200(1)&11.2121221180(64)&14.5732728676\\ 
45&-&11.2449614367(1)&11.3507113354(64)&14.9127266171\\ 
46&-&11.3400733366(1)&11.4876554231(64)&15.2524489024\\ 
47&-&11.4345422752(1)&11.6230109031(64)&15.5924296824\\ 
48&-&11.5283741157(1)&11.7568311641(64)&15.9326594706\\ 
49&-&11.6215752958(1)&11.8891666983(64)&16.2731292948\\ 
50&-&11.7141527153(1)&12.0200653143(64)&16.6138306609\\ 
51&-&11.8061136412(1)&12.1495723315(64)&16.9547555185\\ 
52&-&11.8974656270(1)&12.2777307566(64)&17.2958962316\\ 
53&-&11.9882164445(1)&12.4045814440(64)&17.6372455494\\ 
54&-&12.0783740262(1)&12.5301632431(64)&17.9787965817\\ 
55&-&12.1679464163(1)&12.6545131318(64)&18.3205427746\\ 
56&-&12.2569417293(1)&12.7776663403(64)&18.6624778895\\ 
57&-&12.3453681150(1)&12.8996564632(64)&19.0045959826\\ 
58&-&12.4332337292(1)&13.0205155635(64)&19.3468913870\\ 
59&-&12.5205467084(1)&13.1402742680(64)&19.6893586953\\ 
60&-&12.6073151490(1)&13.2589618553(64)&20.0319927445\\ 
61&-&12.6935470899(1)&13.3766063367(64)&20.3747886010\\ 
62&-&12.7792504980(1)&13.4932345317(64)&20.7177415472\\ 
63&-&12.8644332555(1)&13.6088721370(64)&21.0608470693\\ 
64&-&12.9491031509(1)&13.7235437911(64)&21.4041008457\\ 
65&-&13.0332678699(1)&13.8372731338(64)&21.7474987359\\ 
66&-&13.1169349893(1)&13.9500828619(64)&22.0910367707\\ 
67&-&13.2001119721(1)&14.0619947808(64)&22.4347111430\\ 
68&-&13.2828061628(1)&14.1730298522(64)&22.7785181988\\ 
69&-&13.3650247847(1)&14.2832082393(64)&23.1224544293\\ 
70&-&13.4467749378(1)&14.3925493480(64)&23.4665164631\\ 
71&-&13.5280635973(1)&14.5010718666(64)&23.8107010593\\ 
72&-&13.6088976125(1)&14.6087938018(64)&24.1550051007\\ 
73&-&13.6892837067(1)&14.7157325129(64)&24.4994255877\\ 
74&-&13.7692284774(1)&14.8219047440(64)&24.8439596324\\ 
75&-&13.8487383967(1)&14.9273266536(64)&25.1886044530\\ 
76&-&13.9278198121(1)&15.0320138433(64)&25.5333573687\\ 
77&-&14.0064789478(1)&15.1359813835(64)&25.8782157950\\ 
78&-&14.0847219061(1)&15.2392438390(64)&26.2231772391\\ 
79&-&14.1625546684(1)&15.3418152917(64)&26.5682392955\\ 
80&-&14.2399830978(1)&15.4437093632(64)&26.9133996419\\ 
81&-&14.3170129400(1)&15.5449392350(64)&27.2586560357\\ 
82&-&14.3936498260(1)&15.6455176686(64)&27.6040063101\\ 
83&-&14.4698992734(1)&15.7454570234(64)&27.9494483710\\ 
84&-&14.5457666890(1)&15.8447692745(64)&28.2949801933\\ 
85&-&14.6212573710(1)&15.9434660291(64)&28.6405998184\\ 
86&-&14.6963765103(1)&16.0415585421(64)&28.9863053512\\ 
87&-&14.7711291939(1)&16.1390577307(64)&29.3320949571\\ 
88&-&14.8455204060(1)&16.2359741886(64)&29.6779668599\\ 
89&-&14.9195550307(1)&16.3323181988(64)&30.0239193388\\ 
90&-&14.9932378542(1)&16.4280997469(64)&30.3699507268\\ 
91&-&15.0665735668(1)&16.5233285321(64)&30.7160594076\\ 
92&-&15.1395667649(1)&16.6180139793(64)&31.0622438146\\ 
93&-&15.2122219536(1)&16.7121652491(64)&31.4085024279\\ 
94&-&15.2845435482(1)&16.8057912488(64)&31.7548337731\\ 
95&-&15.3565358768(1)&16.8989006413(64)&32.1012364193\\ 
96&-&15.4282031818(1)&16.9915018549(64)&32.4477089775\\ 
97&-&15.4995496225(1)&17.0836030920(64)&32.7942500986\\ 
98&-&15.5705792764(1)&17.1752123371(64)&33.1408584724\\ 
99&-&15.6412961417(1)&17.2663373654(64)&33.4875328259\\ 
100&-&15.7117041387(1)&17.3569857496(64)&33.8342719217\\ 
\fi

\begin{table}[h]
\caption{Computed values of $C_\infty(\Omega)$ for $\Omega=(0,1)^2$ and $3\leq q\leq10$.
The numbers of division $n$ are shown in the corresponding parentheses.}
\begin{center}
{\renewcommand\arraystretch{1.1}
{\tabcolsep=1.2mm
\begin{tabular}{c||c}
\hline
$q$ & Theorem \ref{conesti1}\\
\hline
3&5.611920(16)\\ 
4&4.756829(64)\\ 
5&4.000001(64)\\ 
6&3.563595(64)\\ 
7&3.281342(64)\\ 
8&3.084422(64)\\ 
9&2.939469(64)\\ 
10&2.828428(64)\\ 
\hline
\end{tabular}
\label{tb:square1}
}}
\end{center}
\end{table}

\begin{figure}[H]
\centering
\includegraphics[width=12.0cm]{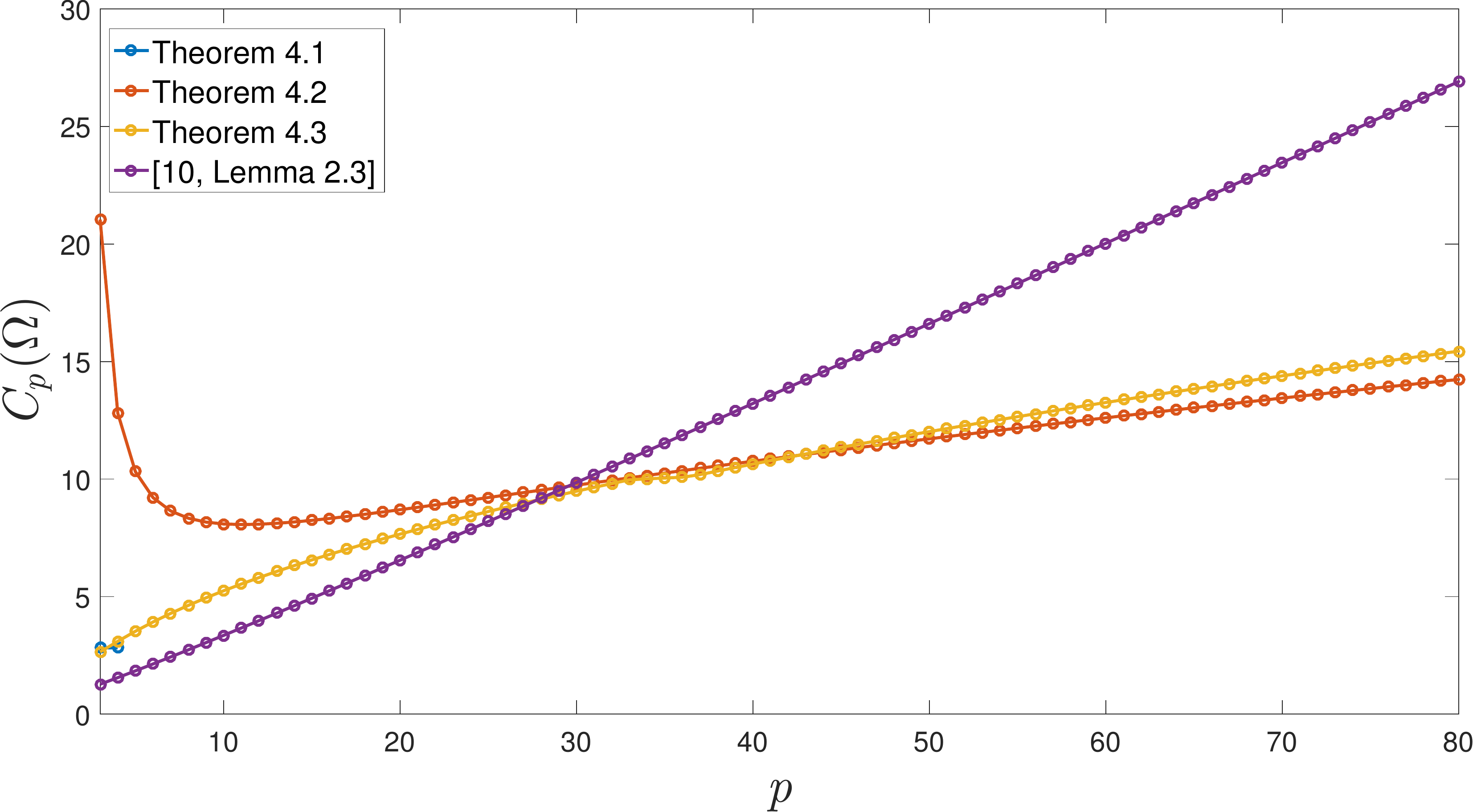}
\vspace{0.3cm}\caption{Computed values of $C_p(\Omega)$ for $\Omega=(0,1)^2$ and $3\leq p\leq 80$.}
\label{plotrec}
\end{figure}


\subsection{Estimation on a triangle domain} 
For the second example, we select the case in which $\Omega$ is a regular triangle with the vertices $(0,0)$, $(1,0)$, and $(1/2,\sqrt{3}/2)$.
In this case, $V$ is the regular hexagon displayed in Fig.~\ref{fig:V2}.

For $n=1,4,16,64,\cdots$, we define each $\Omega_i~(1\leq i\leq n)$ as a regular triangle with side length $1/\sqrt{n}$; see Fig.~\ref{fig:omegatri} for the case in which $n=4$ and $n=16$.
For this division of $\Omega$, Theorem \ref{mainembedding} states that
\[
C_p(\Omega)=2^{1-\frac{1}{q}}\max\left((4n)^{-\left(\frac{1}{p}-\frac{1}{q}\right)},~\max_{1\leq i\leq n} D_p(\Omega_i)\right).
\]
Table \ref{tb:triangle} compares upper bounds of $C_p(\Omega)$ computed by Theorem \ref{conesti0}, \ref{conesti00}, \ref{conesti}, and \cite[Corollary $4.3$]{tanaka};
the numbers of division $n$ are shown in the corresponding parentheses.
Moreover, these values are plotted in Fig.~\ref{fig:triplot}.
Theorem \ref{conesti} provides the sharpest estimates for all $3\leq p\leq 80$.

We also show the values of $C_\infty(\Omega)$ computed by Theorem \ref{conesti1} for $3\leq q\leq 10$ in Table \ref{tb:triangle1}.

\begin{figure}[H]
\begin{minipage}{0.5\hsize}
\centering
\includegraphics[width=6.8cm]{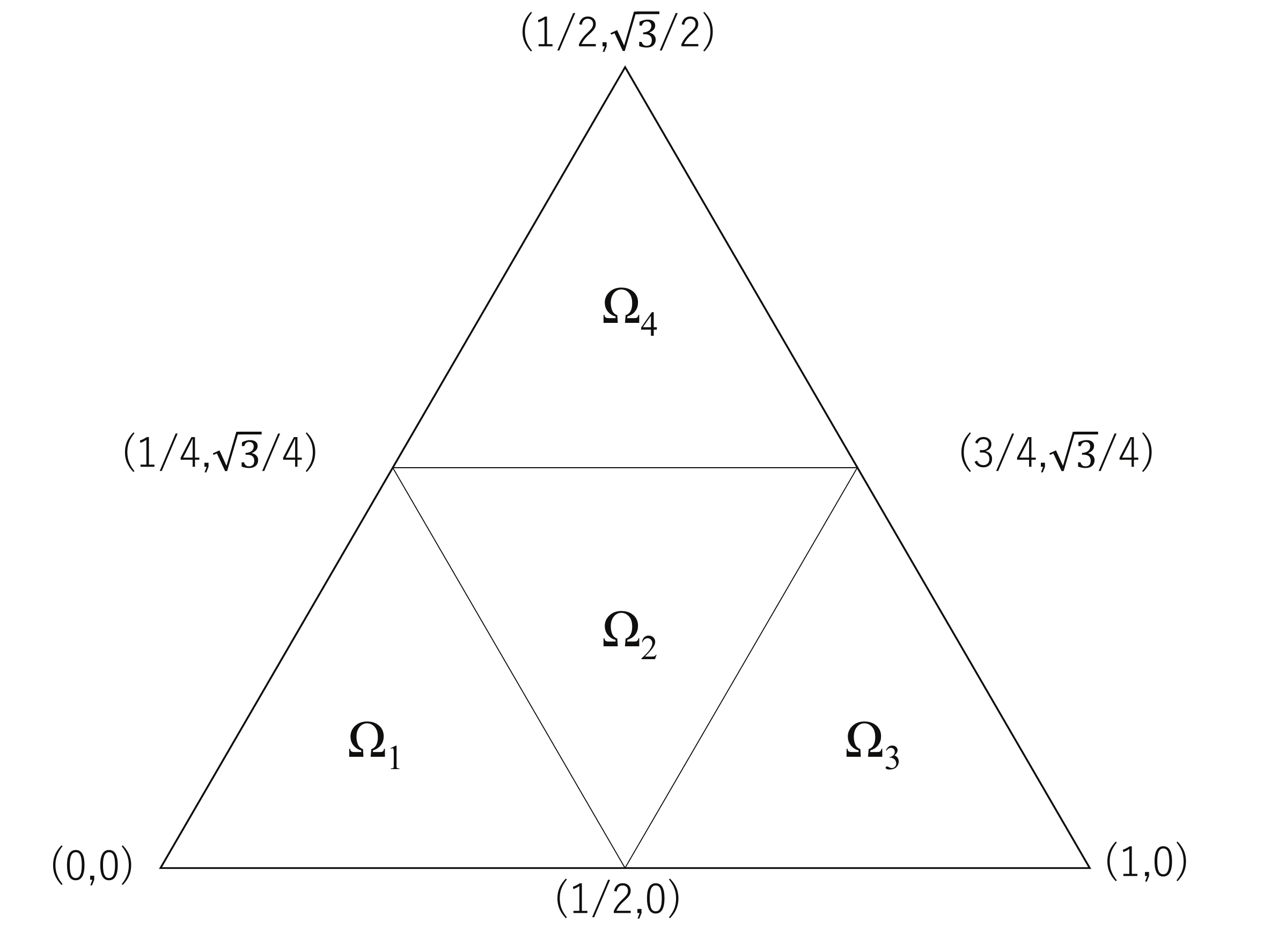}
\end{minipage}
\begin{minipage}{0.5\hsize}
\centering
\includegraphics[width=10.2cm]{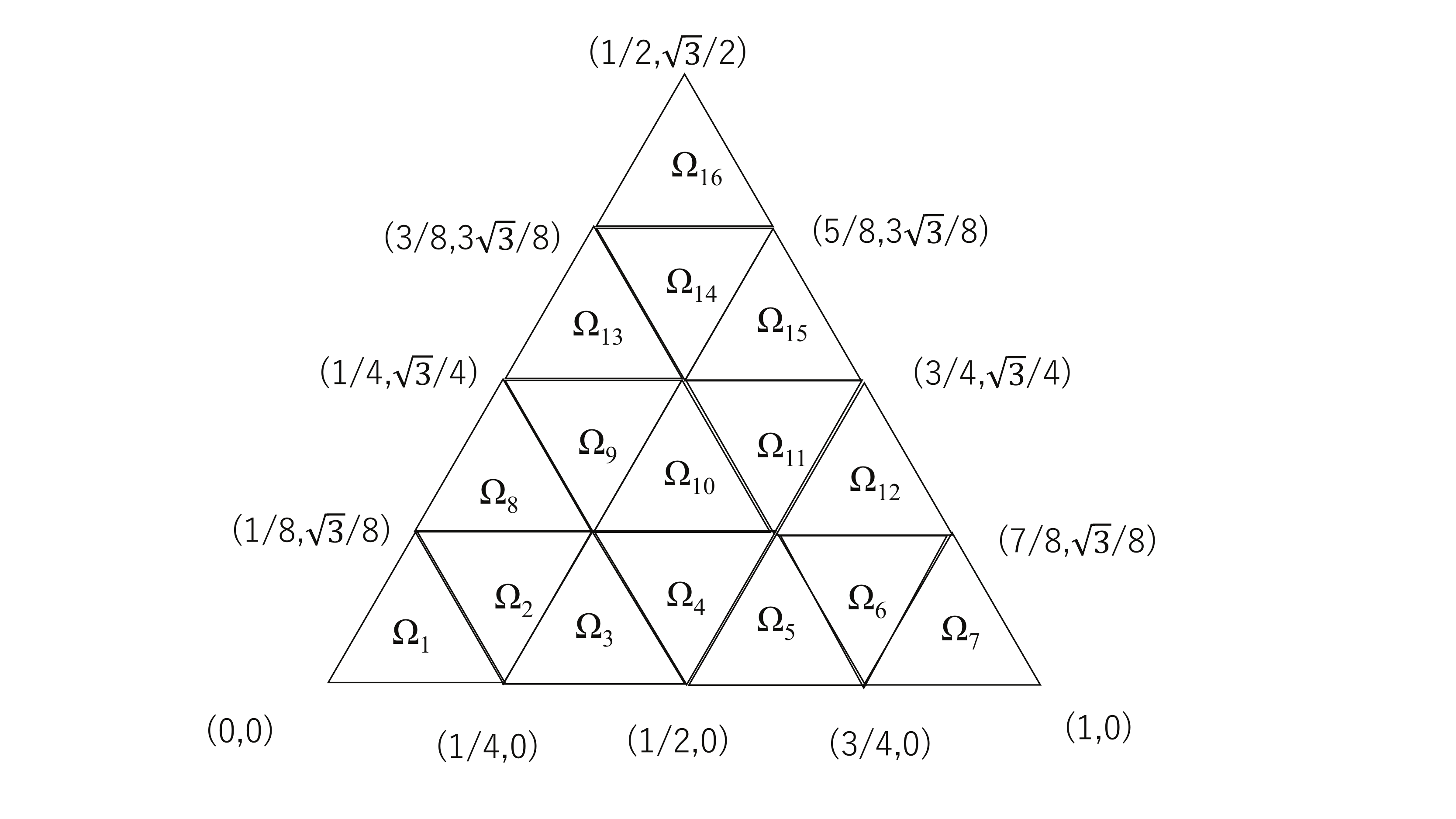}
\end{minipage}
\caption{$\Omega_i$ when $n=4$ (the left side) and $n=16$ (the right side).}
\label{fig:omegatri}
\end{figure}

\begin{figure}[H]
\centering
\includegraphics[width=9.5cm]{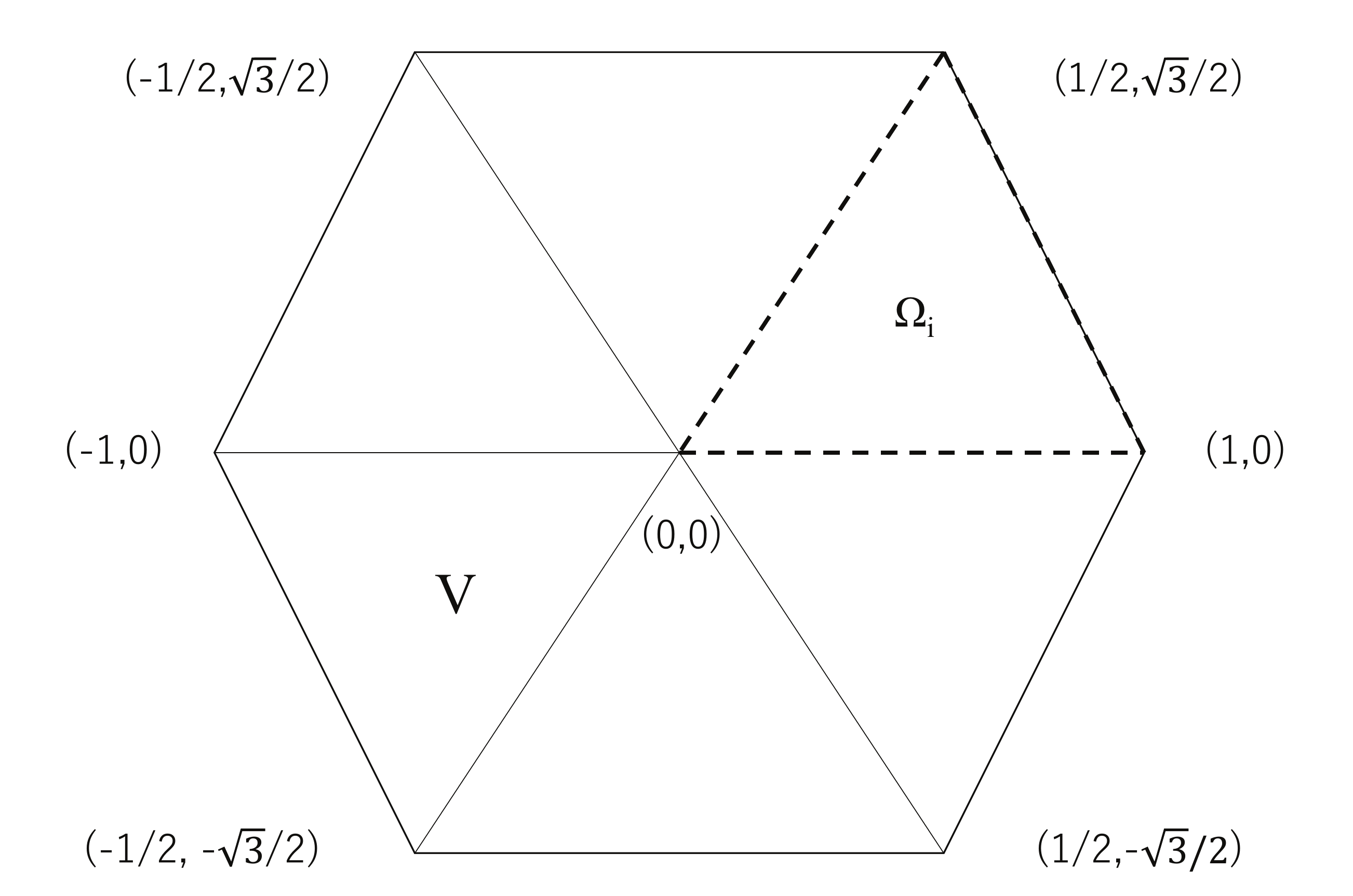}
\caption{The domain $V$ in Theorem \ref{conesti} and \ref{conesti1}.}
\label{fig:V2}
\end{figure}

\begin{table}[H]
\caption{Computed values of $C_p(\Omega)$ for a regular triangle domain $\Omega$ and $q=2$.
The numbers of division $n$ are shown in the corresponding parentheses.
Theorem \ref{conesti0} cannot be used for $p>4$ when $N=2$.}
\begin{center}
{\renewcommand\arraystretch{1.1}
{\tabcolsep=1.2mm
\begin{tabular}{c||cccc}
\hline
$p$ & Theorem \ref{conesti0} & Theorem \ref{conesti00} & Theorem \ref{conesti}& \cite[Corollary $4.3$]{tanaka}\\
\hline
3&3.251833(64)&25.741822(1)&2.366856(4)&2.538335$\times 10^4$\\ 
4&3.320470(4)&16.123490(1)&2.709475(4)&3.553398$\times 10^4$\\ 
5&-&13.214188(1)&3.042818(4)&4.464990$\times 10^4$\\ 
6&-&11.937755(1)&3.353176(4)&5.297547$\times 10^4$\\ 
7&-&11.295642(1)&3.641844(4)&6.067602$\times 10^4$\\ 
8&-&10.960821(1)&3.911816(4)&6.786738$\times 10^4$\\ 
9&-&10.795618(1)&4.165864(4)&7.463399$\times 10^4$\\ 
10&-&10.732444(1)&4.406282(4)&8.103954$\times 10^4$\\
\hline
20&-&11.739049(1)&6.341217(4)&1.326097$\times 10^5$\\
30&-&13.223132(1)&7.622031(16)&1.717928$\times 10^5$\\
40&-&14.647402(1)&8.748299(16)&2.045371$\times 10^5$\\
50&-&15.974507(1)&9.869218(16)&2.331904$\times 10^5$\\
60&-&17.212379(1)&10.876336(16)&2.589578$\times 10^5$\\
70&-&18.373623(1)&11.798394(16)&2.825529$\times 10^5$\\
80&-&19.469505(1)&12.653794(16)&3.044383$\times 10^5$\\
\hline
\end{tabular}
\label{tb:triangle}
}}
\end{center}
\end{table}

\begin{table}[H]
\caption{Computed values of $C_\infty(\Omega)$ for a regular triangle domain $\Omega$ and $3\leq q\leq 10$.
The numbers of division $n$ are shown in the corresponding parentheses.}
\begin{center}
{\renewcommand\arraystretch{1.1}
{\tabcolsep=1.2mm
\begin{tabular}{c||c}
\hline
$q$ & Theorem \ref{conesti1}\\
\hline
3&4.797132(4)\\ 
4&4.146459(16)\\ 
5&3.583834(16)\\ 
6&3.251833(16)\\ 
7&3.033691(16)\\ 
8&2.879743(16)\\ 
9&2.765427(16)\\ 
10&2.677251(16)\\ 
\hline
\end{tabular}
\label{tb:triangle1}
}}
\end{center}
\end{table}

\begin{figure}[h]
\centering
\includegraphics[width=12.0cm]{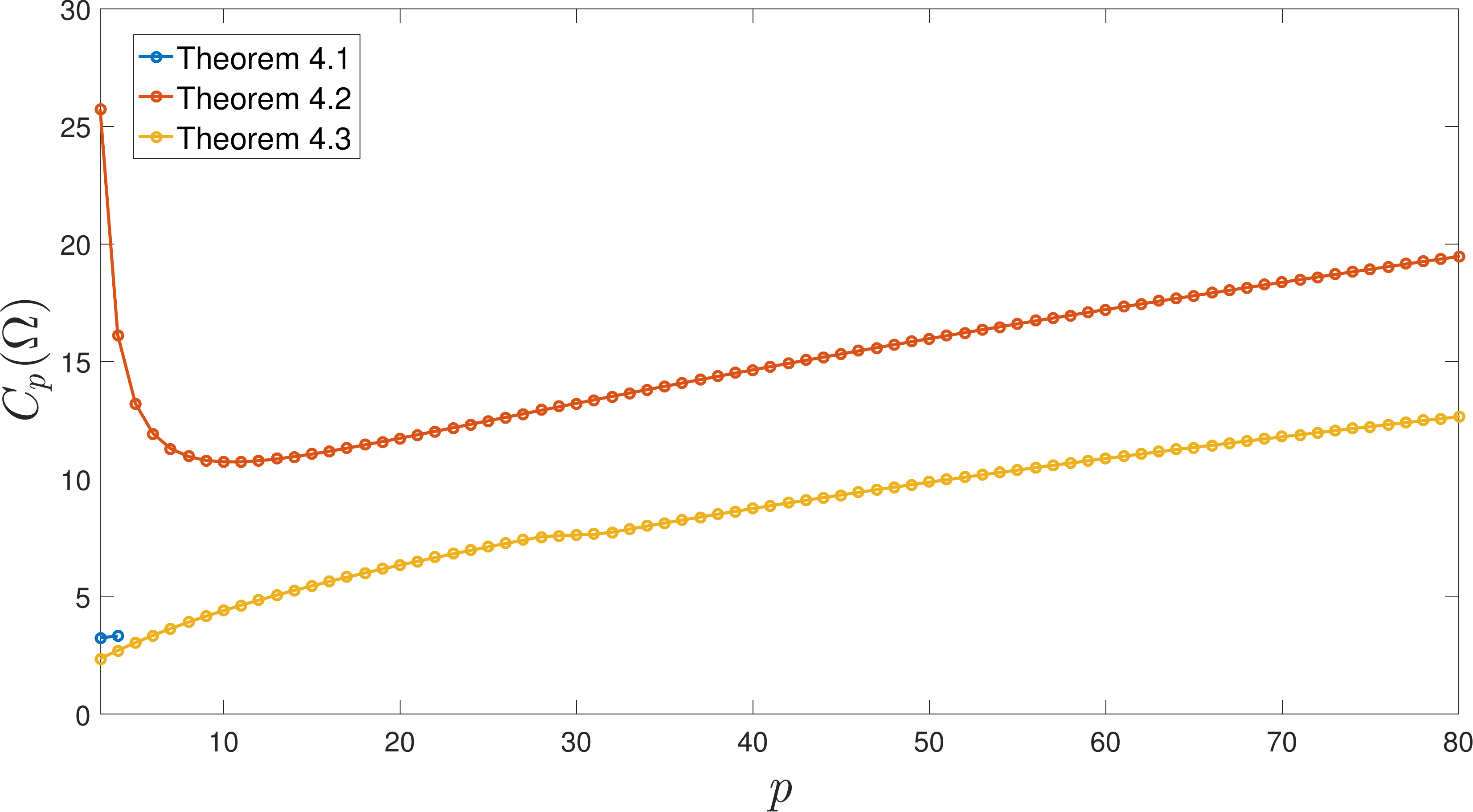}
\vspace{0.3cm}\caption{Computed values of $C_p(\Omega)$ for a regular triangle domain $\Omega$ and $3\leq p\leq 80$.}
\label{fig:triplot}
\end{figure}

\begin{remark}
The values of $C_p(\Omega)$ derived from Theorem $\ref{conesti0}$ to $\ref{conesti1}$ (provided in the Tables $\ref{tb:assume0}$ to $\ref{tb:triangle1}$) can be directly used for any domain that is composed of unit squares and triangles with side length $1$ $($see Fig.~$\ref{fig:mixdomain}$ for some examples$)$.
\end{remark}

\begin{figure}[h]
\begin{minipage}{0.5\hsize}
\centering
\includegraphics[width=8.3cm]{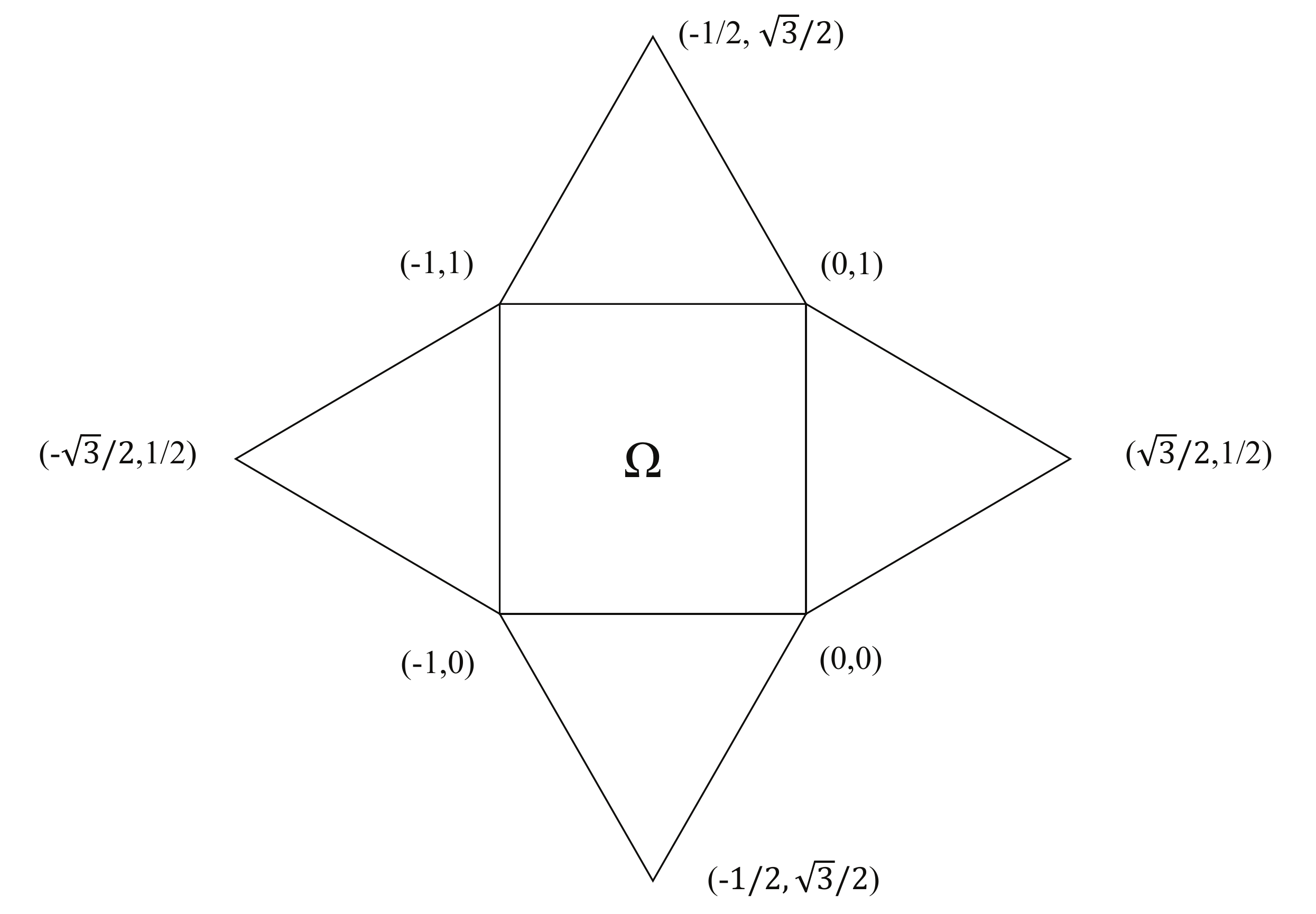}
\end{minipage}
\begin{minipage}{0.5\hsize}
\centering
\includegraphics[width=8.3cm]{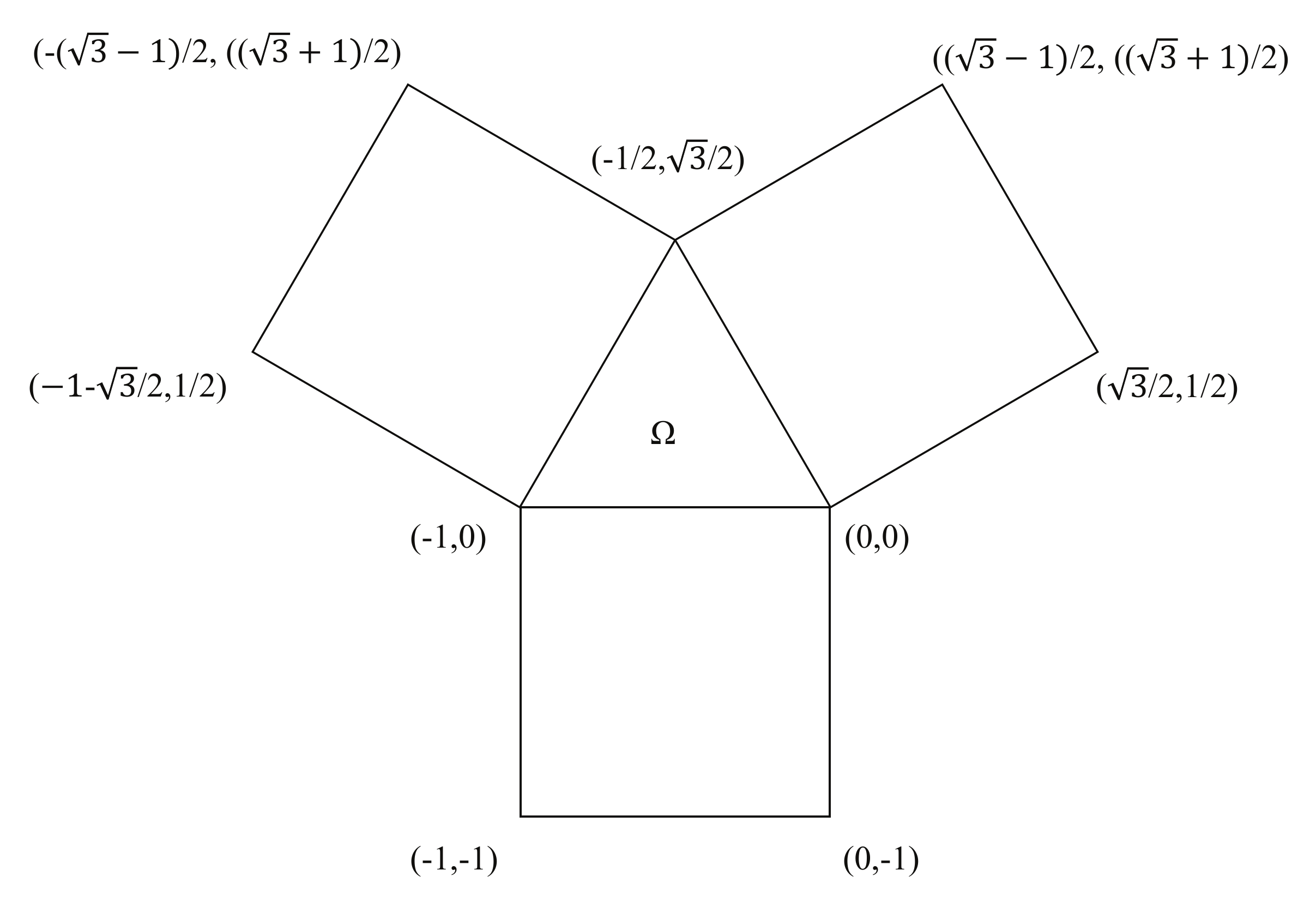}
\end{minipage}

\begin{minipage}{\hsize}
\centering
\includegraphics[width=7.5cm]{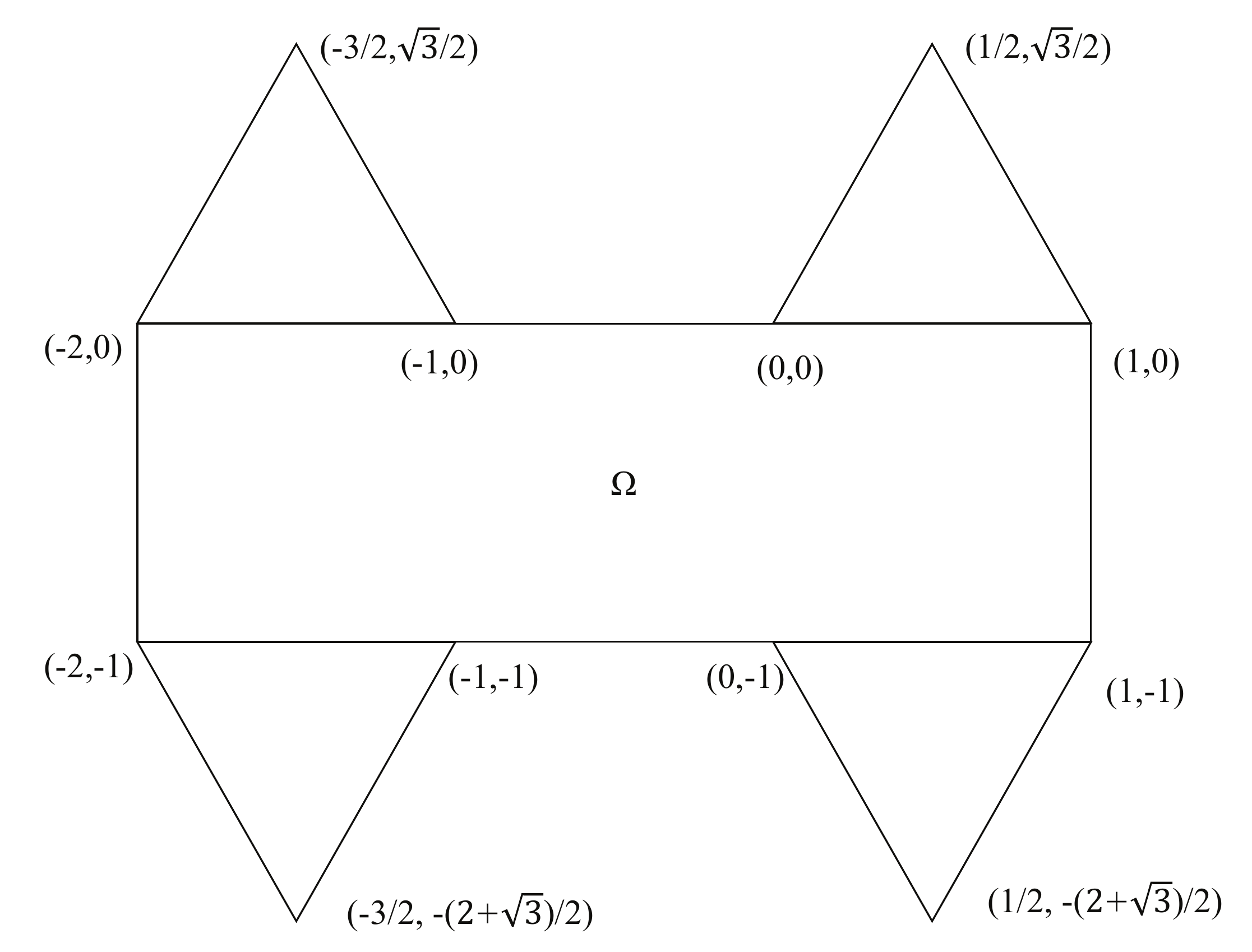}
\end{minipage}
\vspace{0.3cm}\caption{Examples of domains $\Omega$ that are composed of unit squares and triangles with side length $1$.}
\label{fig:mixdomain}
\end{figure}

\clearpage

\section{Conclusion}
We proposed several theorems that provide explicit values of Sobolev type embedding constant $C_p(\Omega)$ satisfying \eqref{aim} for a domain $\Omega$ that can be divided into a finite number of bounded convex domains.
These theorem give sharper estimates of $C_p(\Omega)$ than previous estimates derived by the method in \cite{tanaka}.
This accuracy improvement leads to much applicability of the estimates of $C_p(\Omega)$ to verified numerical computations for PDEs.
\appendix

\section{Embedding constant $C_p(\Omega)$ on dividable domains}
Theorem \ref{estimatecor} provides an estimation of the embedding constant $C_p(\Omega)$ for a domain $\Omega$ that can be divided into domains $\Omega_i$ (such as convex domains and Lipschitz domains) satisfying \eqref{omega1} and \eqref{omega2}.
\begin{theorem}\label{estimatecor}
Let $\Omega\subset\mathbb{R}^N~(N\in\mathbb{N})$ be a domain that can be divided into a finite number of domains $\Omega_i~(i=1,2,3,\cdots, n)$ satisfying \eqref{omega1} and \eqref{omega2}.
Assume that, for every $\Omega_i~(i=1,2,3,\cdots, n)$, there exists a constant $C_p(\Omega_i)$ such that $\|u\|_{L^p(\Omega_i)}\leq C_{p}(\Omega_i)\|u\|_{W^{1,q}(\Omega_i)}$ for all $u\in W^{1,q}(\Omega_i)$. Then, \eqref{aim} holds valid for 
\[
C_p(\Omega)=M_{p,q}\max_{1\leq i\leq n} C_{p}(\Omega_i),
\]
where  
\begin{align*}
M_{p,q}=
\begin{cases}
1\hspace{1cm}(p\geq q),\\[2mm]
n^{\frac{1}{p}-\frac{1}{q}}~~(p<q).
\end{cases}
\end{align*}

\end{theorem}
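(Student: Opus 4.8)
The plan is to estimate the $L^p$ norm of $u$ over $\Omega$ by splitting the integral (or essential supremum) over the subdomains $\Omega_i$ and applying the hypothesis on each piece, exactly as in the proof of Theorem~\ref{mainembedding} but now starting from the bound $\|u\|_{L^p(\Omega_i)}\le C_p(\Omega_i)\|u\|_{W^{1,q}(\Omega_i)}$ directly rather than going through a Poincar\'e-type constant. First I would treat the case $p<\infty$: writing $\|u\|_{L^p(\Omega)}^p=\sum_{i=1}^n\|u\|_{L^p(\Omega_i)}^p$, I bound each summand by $C_p(\Omega_i)^p\|u\|_{W^{1,q}(\Omega_i)}^p\le (\max_i C_p(\Omega_i))^p\,\|u\|_{W^{1,q}(\Omega_i)}^p$, so that
\begin{align*}
\|u\|_{L^p(\Omega)}\le \Bigl(\max_{1\le i\le n}C_p(\Omega_i)\Bigr)\left(\sum_{i=1}^n\|u\|_{W^{1,q}(\Omega_i)}^p\right)^{\frac1p}.
\end{align*}
The remaining task is to compare $\bigl(\sum_i a_i^p\bigr)^{1/p}$ with $\bigl(\sum_i a_i^q\bigr)^{1/q}$ where $a_i=\|u\|_{W^{1,q}(\Omega_i)}\ge 0$, because $\bigl(\sum_i\|u\|_{W^{1,q}(\Omega_i)}^q\bigr)^{1/q}=\|u\|_{W^{1,q}(\Omega)}$ thanks to \eqref{omega1}, \eqref{omega2} (the $\Omega_i$ tile $\Omega$ up to measure zero, so the $q$-th powers of the local $W^{1,q}$ norms sum to the global one).

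The comparison of the two finite $\ell^r$-type sums is the crux, and it is the step I expect to require the most care (though it is elementary). When $p\ge q$, the embedding $\ell^q\hookrightarrow\ell^p$ on $\mathbb{R}^n$ with constant $1$ gives $\bigl(\sum_i a_i^p\bigr)^{1/p}\le\bigl(\sum_i a_i^q\bigr)^{1/q}$, hence $M_{p,q}=1$ works. When $p<q$, I would instead use H\"older's inequality with exponents $q/p>1$ and its conjugate applied to $\sum_i a_i^p\cdot 1$, yielding $\sum_i a_i^p\le\bigl(\sum_i a_i^q\bigr)^{p/q} n^{1-p/q}$, i.e. $\bigl(\sum_i a_i^p\bigr)^{1/p}\le n^{1/p-1/q}\bigl(\sum_i a_i^q\bigr)^{1/q}$, which gives the factor $M_{p,q}=n^{1/p-1/q}$. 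In both subcases we conclude $\|u\|_{L^p(\Omega)}\le M_{p,q}\,(\max_i C_p(\Omega_i))\,\|u\|_{W^{1,q}(\Omega)}$, which is \eqref{aim} with the asserted constant.

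Finally I would dispatch the case $p=\infty$ separately: here $\|u\|_{L^\infty(\Omega)}=\max_{1\le i\le n}\|u\|_{L^\infty(\Omega_i)}\le(\max_i C_\infty(\Omega_i))\max_i\|u\|_{W^{1,q}(\Omega_i)}$, and since each $\|u\|_{W^{1,q}(\Omega_i)}\le\|u\|_{W^{1,q}(\Omega)}$ (the local norm is dominated by the global one, using $q<\infty$ or the obvious $L^\infty$ version if $q=\infty$), we get the bound with $M_{\infty,q}=1$, consistent with $p\ge q$ being impossible only when $q=\infty$, in which case $M_{\infty,\infty}=1$ as well. The only mild subtlety to flag is the interpretation of $n^{1/p-1/q}$ and the norms when $p$ or $q$ equals $\infty$, handled by the conventions already in force in the paper ($1/\infty=0$); no genuine obstacle arises.
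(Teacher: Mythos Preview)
Your proposal is correct and follows essentially the same route as the paper's proof: split $\|u\|_{L^p(\Omega)}$ over the $\Omega_i$, apply the hypothesis on each piece, factor out $\max_i C_p(\Omega_i)$, and then compare the $\ell^p$ and $\ell^q$ norms of the vector $(\|u\|_{W^{1,q}(\Omega_i)})_i$, with $p=\infty$ handled separately. The only cosmetic difference is that you supply the elementary proof of $|x|_p\le M_{p,q}|x|_q$ (via the $\ell^q\hookrightarrow\ell^p$ embedding when $p\ge q$ and H\"older when $p<q$), whereas the paper simply cites this inequality from \cite[Lemma~A.1]{tanaka}.
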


\begin{proof}
We consider both the cases in which $p<\infty$ and $p=\infty$.

When $p<\infty$, it follows that 
\begin{align*}\nonumber
\|u\|_{L^p(\Omega)}&=\left(\sum_{1\leq i\leq n}\|u\|_{L^p(\Omega_i)}^p\right)^{1/p}\\ \nonumber
&\leq\left(\sum_{1\leq i\leq n}C_{p}(\Omega_i)^p\|u\|_{W^{1,q}(\Omega_i)}^p\right)^{1/p}\\ \nonumber 
&\leq \max_{1\leq i\leq n} C_{p}(\Omega_i) \left(\sum_{1\leq i\leq n}\|u\|_{W^{1,q}(\Omega_i)}^p\right)^{1/p}\\
&\leq M_{p,q}\max_{1\leq i\leq n} C_{p}(\Omega_i) \|u\|_{W^{1,q}(\Omega)}.
\end{align*}
Note that $|x|_p\leq M_{p,q}|x|_q$ holds for $x=(x_1,x_2,\cdots,x_n)\in \mathbb{R}^n$ (see \cite[Lemma $A.1$]{tanaka} for a detailed proof), where we denote
\begin{align*}
|x|_p=
\begin{cases}
\left(\displaystyle\sum_{1\leq i\leq n}|x_i|^p\right)^{\frac{1}{p}}~~(1\leq p<\infty),\\[2mm]
\displaystyle\max_{1\leq i\leq n}|x_i|\hspace{1.5cm}(p=\infty).
\end{cases}
\end{align*}
When $p=\infty$, 
\begin{align*}\nonumber
\|u\|_{L^\infty(\Omega)}&= \max_{1\leq i\leq n}\|u\|_{L^{\infty}(\Omega_i)}\\ \nonumber
&\leq \max_{1\leq i\leq n} C_{p}(\Omega_i)\|u\|_{W^{1,q}(\Omega_i)}\\ \nonumber
&\leq \max_{1\leq i\leq n} C_{p}(\Omega_i)\max_{1\leq i\leq n}\|u\|_{W^{1,q}(\Omega_i)}.
\end{align*}

Since $M_{\infty,q}=1$, we have
\begin{align*}
\|u\|_{L^\infty(\Omega)}&\leq \max_{1\leq i\leq n} C_{p}(\Omega_i)\|u\|_{W^{1,q}(\Omega)}.
\end{align*}

\end{proof}

\section*{Acknowledgements}
This work was supported by CREST, Japan Science and Technology Agency. 
The second author (K.T.) was supported by the Waseda Research Institute for Science and Engineering, the Grant-in-Aid for Young Scientists (Early Bird Program). The third author (K.S.) was supported by JSPS KAKENHI Grant Number 16K17651.

\bibliographystyle{plain} 
\bibliography{ref}

\end{document}